\tikzset{
v/.style={
  circle, draw, inner sep=2pt, minimum size=6pt, fill=white}
}
\theoremstyle{plain}
\newtheorem{theorem}{Theorem}[section]
\newtheorem{lemma}[theorem]{Lemma}
\newtheorem{proposition}[theorem]{Proposition}
\newtheorem{corollary}[theorem]{Corollary}
\theoremstyle{definition}
\newtheorem{remark}[theorem]{Remark}
\DeclareMathOperator{\Sym}{Sym}
\DeclareMathOperator{\Hom}{Hom}
\DeclareMathOperator{\type}{type}
\DeclareMathOperator{\St}{St}
\DeclareMathOperator{\Cograph}{\mathbf{Cograph}}
\title {The Chromatic Symmetric Functions of Trivially Perfect Graphs and Cographs}
\author{ 
Shuhei Tsujie\thanks{Department of Mathematics, Hokkaido University, North 10, West 8, Kita-ku, Sapporo 060-0810, JAPAN E-mail: tsujie@math.sci.hokudai.ac.jp}
}
\date{}
\begin{document}
\maketitle

\begin{abstract}
Richard P. Stanley defined the chromatic symmetric function of a simple graph and has conjectured that every tree is determined by its chromatic symmetric function. 
Recently, Takahiro Hasebe and the author proved that the order quasisymmetric functions, which are analogs of the chromatic symmetric functions, distinguish rooted trees. 
In this paper, using a similar method, we prove that the chromatic symmetric functions distinguish trivially perfect graphs.  
Moreover, we also prove that claw-free cographs, that is, $ \{K_{1,3},P_{4}\} $-free graphs belong to a known class of $ e $-positive graphs. 
\end{abstract}

{\footnotesize {\textit{Keywords:}} 
chromatic symmetric function, 
threshold graph, 
trivially perfect graph, 
cograph, 
claw-free, 
$ e $-positive
}

{\footnotesize {\textit{2010 MSC:}}
05C15, 
05C25, 
05C31, 
05C60, 
05E05, 

}

\section{Introduction}\label{Sec:introduction}
Let $ G=(V_{G},E_{G}) $ be a finite simple graph. 
A \textbf{proper coloring} of $ G $ is a function $ \kappa \colon V_{G} \to \mathbb{N} = \{1, 2, \dots\} $ such that $ \{u,v\} \in E_{G} $ implies $ \kappa(u) \neq \kappa(v) $. 
Every proper coloring of $ G $ can be regarded as a graph homomorphism from $ G $ to $ K_{\mathbb{N}} $, the complete graph on $ \mathbb{N} $. 
Let $ \Hom(G,K_{\mathbb{N}}) $ denote the set of proper colorings of $ G $. 
Stanley \cite{stanley1995symmetric-aim} defined the \textbf{chromatic symmetric function} of $ G $ as follows: 
\begin{align*}
X(G,\boldsymbol{x}) \coloneqq \sum_{\kappa \in \Hom(G,K_{\mathbb{N}})} \prod_{v \in V_{G}}x_{\kappa(v)}, 
\end{align*}
where $ \boldsymbol{x} $ denotes infinitely many indeterminates $ (x_{1}, x_{2}, \dots) $. 
By definition, the chromatic symmetric function is homogeneous of degree $ |V_{G}| $. 

Stanley conjectured in \cite{stanley1995symmetric-aim} that the chromatic symmetric function distinguishes trees. 
Namely, if two trees $ T_{1}, T_{2} $ have the same chromatic symmetric function, then $ T_{1} $ and $ T_{2} $ are isomorphic. 

A finite poset $ P $ admits the order quasisymmetric functions, which are kinds of $ P $-partition generating functions studied by Gessel \cite{gessel1984multipartite-cm}. 
The order quasisymmetric functions are considered to be analogs of the chromatic symmetric function. 
A recent study \cite{hasebe2017order-joac} by Hasebe and the author showed that the order quasisymmetric functions distinguish rooted trees (with the natural poset structures). 
The proof is based on algebraic structures of the ring of quasisymmetric functions. 
In this paper, we will focus on algebraic structures of the ring of symmetric functions and consider the similar problem for trivially perfect graphs. 

We will define classes of graphs which are treated in this paper. 
Let $ G,H $ be simple graphs. 
The \textbf{disjoint union} $ G \sqcup H $ is defined by $ V_{G \sqcup H} \coloneqq V_{G} \sqcup V_{H} $ and $ E_{G\sqcup H} \coloneqq E_{G} \sqcup E_{H} $ (the set theoretical disjoint unions). 
The \textbf{join} $ G + H $ is defined by $ V_{G + H} \coloneqq V_{G} \sqcup V_{H} $ and $ E_{G+H} \coloneqq E_{G} \sqcup E_{H} \sqcup \Set{\{u,v\} | u \in V_{G}, v \in V_{H}} $. 
Note that some authors use the symbol $ ``+" $ for disjoint unions. 
See Figure \ref{Fig:disjoint union and join} for examples. 
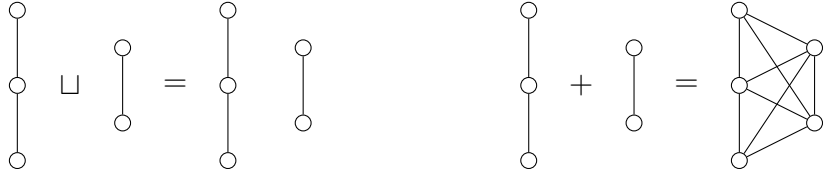
\begin{figure}[t]
\centering
\begin{tikzpicture}
\draw (0,-1) node[v](g1){};
\draw (0, 0) node[v](g2){};
\draw (0, 1) node[v](g3){};
\draw (g1)--(g2)--(g3);
\draw (0.7,0) node(){$ \sqcup $};
\draw (1.4,-0.5) node[v](h1){};
\draw (1.4, 0.5) node[v](h2){};
\draw (h1)--(h2);
\draw (2.1,0) node(){$ = $}; 
\draw (2.8,-1) node[v](gg1){};
\draw (2.8, 0) node[v](gg2){};
\draw (2.8, 1) node[v](gg3){};
\draw (gg1)--(gg2)--(gg3);
\draw (3.8,-0.5) node[v](hh1){};
\draw (3.8, 0.5) node[v](hh2){};
\draw (hh1)--(hh2);
\end{tikzpicture}
\hspace{25mm}
\begin{tikzpicture}
\draw (0,-1) node[v](g1){};
\draw (0, 0) node[v](g2){};
\draw (0, 1) node[v](g3){};
\draw (g1)--(g2)--(g3);
\draw (0.7,0) node(){$ + $};
\draw (1.4,-0.5) node[v](h1){};
\draw (1.4, 0.5) node[v](h2){};
\draw (h1)--(h2);
\draw (2.1,0) node(){$ = $}; 
\draw (2.8,-1) node[v](gg1){};
\draw (2.8, 0) node[v](gg2){};
\draw (2.8, 1) node[v](gg3){};
\draw (gg1)--(gg2)--(gg3);
\draw (3.8,-0.5) node[v](hh1){};
\draw (3.8, 0.5) node[v](hh2){};
\draw (hh1)--(hh2);
\draw (hh1)--(gg1)--(hh2);
\draw (hh1)--(gg2)--(hh2);
\draw (hh1)--(gg3)--(hh2);
\end{tikzpicture}
\caption{Examples of the disjoint union and the join}\label{Fig:disjoint union and join}
\end{figure}

Some classes $ \mathcal{C} $ of simple graphs can be generated by graph operations. 
We consider the following rules. 
\begin{enumerate}[(1)]
\item \label{rule 1} $ K_{1} \in \mathcal{C} $. 
\item \label{rule 2} If $ G \in \mathcal{C} $, then $ G \sqcup K_{1} \in \mathcal{C} $. 
\item \label{rule 3} If $ G \in \mathcal{C} $, then $ G + K_{1} \in \mathcal{C} $. 
\item \label{rule 4} If $ G,H \in \mathcal{C} $, then $ G \sqcup H \in \mathcal{C} $. 
\item \label{rule 5} If $ G,H \in \mathcal{C} $, then $ G+H \in \mathcal{C} $. 
\item \label{rule 6} If $ G \in \mathcal{C} $, then $ \overline{G} \in \mathcal{C} $. 
\end{enumerate}
Note that $ K_{n} $ denotes the complete graph on $ n $ vertices and $ \overline{G} $ denotes the complement of $ G $. 

A member of the class generated by rules (\ref{rule 1},\ref{rule 2},\ref{rule 3}) is called a \textbf{threshold graph}. 
Threshold graphs were introduced by Chv{\'a}tal and Hammer \cite{chvatal1977aggregation-aodm} by a different definition and they gave several characterizations. 
Our definition of threshold graphs is equivalent to the original definition by \cite[Theorem 1]{chvatal1977aggregation-aodm}. 

A member of the class generated by rules (\ref{rule 1},\ref{rule 3},\ref{rule 4}) is called a \textbf{trivially perfect graph} (or a \textbf{quasi-threshold graph}). 
Trivially perfect graphs were introduced by Wolk \cite{wolk1962comparability-potams,wolk1965note-potams} as a comparability graph of an order-theoretic tree. 
A number of characterizations for trivially perfect graphs are known. 
Our definition of trivially perfect graphs is equivalent to the original definition by \cite[Theorem 3]{jing-ho1996quasi-threshold-dam}. 

A member of the class generated by rules (\ref{rule 1},\ref{rule 4},\ref{rule 6}) is called a \textbf{cograph} (short for \textbf{complement reducible graph}). 
Cographs were discovered independently by several researchers and many characterizations are known. 
In the definition, we can replace the rule (\ref{rule 6}) by (\ref{rule 5}) since we have the formula $ G + H = \overline{\overline{G} \sqcup \overline{H}} $.

Obviously, we have the inclusions 
\begin{align*}
\{\text{threshold graphs}\} \subseteq \{\text{trivially perfect graphs}\} \subseteq \{\text{cographs}\}. 
\end{align*}

For a class $ \mathcal{F} $ of simple graphs, a simple graph is said to be \textbf{$ \mathcal{F} $-free} if it has no induced subgraphs isomorphic to a member of $ \mathcal{F} $. 
The three classes above have forbidden induced subgraph characterizations. 
\begin{theorem}[{\cite[Theorem 3]{chvatal1977aggregation-aodm}},\ {\cite[Theorem 2]{golumbic1978trivially-dm}}, \ {\cite[Theorem 2]{corneil1981complement-dam}}]\label{FISC}
Let $ G $ be a simple graph. 
\begin{enumerate}[(1)]
\item \label{FISC1} $ G $ is threshold if and only if $ G $ is $ \{2K_{2},C_{4},P_{4}\} $-free. 
\item \label{FISC2} $ G $ is trivially perfect if and only if $ G $ is $ \{C_{4}, P_{4}\} $-free. 
\item \label{FISC3} $ G $ is a cograph if and only if $ G $ is $ P_{4} $-free. 
\end{enumerate}
Here, $ 2K_{2} = K_{2} \sqcup K_{2} $, $ C_{4} $ is a cycle of length four, and $ P_{4} $ is a path on four vertices (see Figure \ref{Fig:forbidden}). 
\end{theorem}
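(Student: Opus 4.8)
The plan is to prove all three equivalences by one common scheme, since the cases differ only in bookkeeping: for each class $\mathcal{C}$ with its forbidden family $\mathcal{F}$, establish $\mathcal{C}\subseteq\{\mathcal{F}\text{-free graphs}\}$ and then the reverse inclusion. The first inclusion I would obtain by induction on $|V_{G}|$, checking that $K_{1}$ is $\mathcal{F}$-free and that each generating operation available to $\mathcal{C}$ preserves $\mathcal{F}$-freeness. The ingredients are elementary: an induced subgraph of an $\mathcal{F}$-free graph is $\mathcal{F}$-free; $P_{4}$ and $C_{4}$ are connected, so neither a disjoint union (rule (\ref{rule 4})) nor the attachment of one isolated vertex (rule (\ref{rule 2})) produces a new induced $P_{4}$ or $C_{4}$; an induced $2K_{2}$ cannot use an isolated vertex, so rule (\ref{rule 2}) also preserves $2K_{2}$-freeness; each of $P_{4},C_{4},2K_{2}$ contains a vertex that is neither isolated nor adjacent to all the others, so adding a universal vertex (rule (\ref{rule 3})) creates none of them; and $\overline{P_{4}}\cong P_{4}$, so $P_{4}$-freeness is preserved by complementation (rule (\ref{rule 6})). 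This half is routine.

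The substance is the reverse inclusion, which I would prove by induction on $|V_{G}|$ with base case $K_{1}$ (rule (\ref{rule 1})). It rests on three structural lemmas about an $\mathcal{F}$-free graph $G$ with at least two vertices:
\begin{enumerate}[(a)]
\item if $G$ is $P_{4}$-free, then $G$ or $\overline{G}$ is disconnected;
\item if $G$ is connected and $\{C_{4},P_{4}\}$-free, then $G$ has a vertex adjacent to all the others;
\item if $G$ is disconnected and $2K_{2}$-free, then $G$ has an isolated vertex.
\end{enumerate}
With these in hand the three classes follow quickly. For cographs: if $G$ is disconnected, it is a disjoint union of strictly smaller $P_{4}$-free graphs, so rule (\ref{rule 4}) with the induction hypothesis finishes; if $G$ is connected, (a) makes $\overline{G}$ disconnected, its components are strictly smaller and $P_{4}$-free (using $\overline{P_{4}}\cong P_{4}$), so $\overline{G}$ lies in the class by rule (\ref{rule 4}) and induction, whence $G=\overline{\overline{G}}$ does by rule (\ref{rule 6}). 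For trivially perfect graphs the disconnected case is identical, and in the connected case (b) supplies a universal vertex $v$, so $G=(G-v)+K_{1}$ with $G-v$ trivially perfect by induction, which rule (\ref{rule 3}) lifts. For threshold graphs, $G$ is in particular $\{C_{4},P_{4}\}$-free; the connected case again uses (b) and rule (\ref{rule 3}), while the disconnected case uses (c) to find an isolated vertex $v$ and closes with $G=(G-v)\sqcup K_{1}$ via rule (\ref{rule 2}).

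Thus everything reduces to (a)--(c). Lemma (c) is immediate: in a disconnected graph with no isolated vertex every component has an edge, and two edges from distinct components induce a $2K_{2}$. Lemma (b) I expect to be the crux. Take $v$ of maximum degree and suppose it is not universal; connectedness yields a shortest path $v,p_{1},p_{2},\dots$ to some non-neighbour of $v$, so $p_{1}\in N(v)$ and $p_{2}\notin N[v]$; maximality of $\deg(v)$ then produces $y\in N(v)\setminus N[p_{1}]$ (otherwise $N[v]\subseteq N[p_{1}]$ with equal sizes forces $N[v]=N[p_{1}]$, contradicting $p_{2}\in N[p_{1}]\setminus N[v]$), and one checks that $y,v,p_{1},p_{2}$ are distinct and that $\{y,v,p_{1},p_{2}\}$ induces a $C_{4}$ when $y$ is adjacent to $p_{2}$ and a $P_{4}$ otherwise, contradicting $\{C_{4},P_{4}\}$-freeness in either case. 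Lemma (a) is the classical complement-connectedness property of cographs; a clean way to see it is to order $V_{G}$ as $v_{1},\dots,v_{n}$ so that every prefix induces a connected graph, take a prefix whose complement is connected but whose immediate predecessor's complement is disconnected, and observe that the added vertex $v_{i}$ — which reconnects the components of the earlier complement yet also has a non-neighbour there — yields an induced $P_{4}$ in $\overline{G}$ (a neighbour and a non-neighbour of $v_{i}$ inside one component, plus a neighbour of $v_{i}$ in another), hence an induced $P_{4}$ in $G$ since $\overline{P_{4}}\cong P_{4}$. Of these, (b) is the genuinely new local argument and the step to get right; (a) is standard but needs the prefix-ordering setup to state cleanly.
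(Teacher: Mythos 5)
This theorem is quoted in the paper with citations to Chv\'atal--Hammer, Golumbic, and Corneil--Lerchs--Stewart Burlingham; the paper itself contains no proof, so there is nothing internal to compare against. Your argument is a correct, self-contained, and essentially standard unified proof: the forward inclusions by checking that each generating operation preserves the relevant freeness are all verified correctly (in particular you rightly observe that rule (4) is only needed where the forbidden graphs are connected, and that rule (2) is harmless for $2K_{2}$ because $2K_{2}$ has no isolated vertex); lemma (b) is Wolk's dominating-vertex argument and your maximum-degree proof of it is complete and correct; lemma (c) is immediate as you say; and lemma (a) is Seinsche's theorem. The assembly by induction on $|V_{G}|$ then yields all three characterizations cleanly. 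The one place where your sketch is compressed enough to need a word of repair is inside (a): having found that $v_{i}$ has, within one component $C$ of the complement of the previous prefix, both a $\overline{G}$-neighbour and a $\overline{G}$-non-neighbour, you cannot simply take an arbitrary such pair to form the middle edge of the $P_{4}$ in $\overline{G}$ --- you must choose them \emph{adjacent} in $\overline{G}$, which follows by walking along a path in $C$ from a neighbour to a non-neighbour and taking the first consecutive pair of different types. With that half-sentence added, the proof is complete.
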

\begin{figure}[t]
\centering
\begin{tikzpicture}
\draw (0,1) node[v](1){};
\draw (0,0) node[v](2){};
\draw (1,0) node[v](3){};
\draw (1,1) node[v](4){};
\draw (1)--(2);
\draw (3)--(4);
\draw (0.5,-0.5) node(){$ 2K_{2} $};
\end{tikzpicture}
\qquad
\begin{tikzpicture}
\draw (0,1) node[v](1){};
\draw (0,0) node[v](2){};
\draw (1,0) node[v](3){};
\draw (1,1) node[v](4){};
\draw (1)--(2)--(3)--(4)--(1);
\draw (0.5,-0.5) node(){$ C_{4} $};
\end{tikzpicture}
\qquad
\begin{tikzpicture}
\draw (0,1) node[v](1){};
\draw (0,0) node[v](2){};
\draw (1,0) node[v](3){};
\draw (1,1) node[v](4){};
\draw (1)--(2)--(3)--(4);
\draw (0.5,-0.5) node(){$ P_{4} $};
\end{tikzpicture}
\caption{The forbidden graphs}\label{Fig:forbidden}
\end{figure}
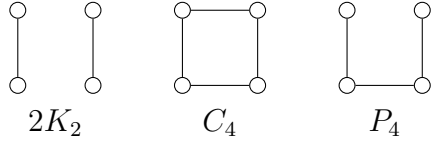

One of two main theorems of this paper is as follows. 
\begin{theorem}\label{main theorem 1}
The chromatic symmetric function distinguishes trivially perfect graphs. 
Namely, if two trivially perfect graphs $ G,H $ have the same chromatic symmetric function, then $ G $ and $ H $ are isomorphic. 
\end{theorem}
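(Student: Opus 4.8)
The plan is to exploit the recursive structure of trivially perfect graphs coming from rules (\ref{rule 1},\ref{rule 3},\ref{rule 4}), together with the well-known behavior of the chromatic symmetric function under disjoint union and join. Recall that $X(G \sqcup H, \boldsymbol{x}) = X(G,\boldsymbol{x}) X(H,\boldsymbol{x})$, so disjoint union corresponds to multiplication in the ring of symmetric functions. The key structural fact is that a connected trivially perfect graph on at least two vertices has a \emph{universal vertex} (a vertex adjacent to all others); equivalently, every connected trivially perfect graph is of the form $K_{1} + G'$ where $G'$ is again trivially perfect. Thus every trivially perfect graph $G$ has a canonical decomposition: write $G = G_{1} \sqcup \cdots \sqcup G_{k}$ into connected components, and each $G_{i}$ is $K_{1} + G_{i}'$ with $G_{i}'$ trivially perfect of smaller order; recursing yields a rooted-forest-like encoding of $G$ (this is precisely the order-theoretic tree of Wolk). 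The strategy is to recover this encoding from $X(G,\boldsymbol{x})$ by induction on $|V_{G}|$.

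The main engine will be an algebraic fact about the ring of symmetric functions $\Lambda$: it is a unique factorization domain (indeed a polynomial ring in the power sums, or in the $h_n$, or the $e_n$), and the chromatic symmetric function of a connected graph is not a product of two chromatic symmetric functions of nonempty graphs in a trivial way — more usefully, $X(G,\boldsymbol{x})$ determines the multiset of chromatic symmetric functions of the connected components of $G$. Concretely, first I would show that from $X(G,\boldsymbol{x})$ one can read off whether $G$ is connected, and if not, factor $X(G,\boldsymbol{x})$ in $\Lambda$ to obtain $\{X(G_{i},\boldsymbol{x})\}_{i=1}^{k}$ as a multiset (using that each $X(G_{i},\boldsymbol{x})$, being the CSF of a connected trivially perfect graph, is an irreducible or otherwise "indecomposable" element relative to the class — one shows no connected graph's CSF properly divides another connected graph's CSF within this setting, or more simply that the component CSFs are themselves uniquely recoverable because the constant-term / lowest-degree structure in some basis detects connectivity). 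This reduces Theorem \ref{main theorem 1} to the connected case.

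For the connected case, $G = K_{1} + G'$, and the task is to recover $X(G',\boldsymbol{x})$ from $X(G,\boldsymbol{x})$. Here I would use the join formula for the chromatic symmetric function. Passing to power-sum coordinates is natural: writing $\omega$ for the standard involution and using that under the join the generating function in the $p_\lambda$ basis transforms by a known operation (the CSF of a join is obtained from the CSFs of the parts via the coproduct / a substitution), one gets that $X(K_{1}+G',\boldsymbol{x})$ is the image of $X(G',\boldsymbol{x})$ under an explicit, invertible linear operator on $\Lambda$ (essentially "multiply by the extra vertex that is adjacent to everything," which in the $p$-basis is a degree-raising operator with no kernel). Inverting this operator recovers $X(G',\boldsymbol{x})$, and since $G'$ is trivially perfect of strictly smaller order, the inductive hypothesis finishes the proof.

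The hard part will be two intertwined technical points: first, rigorously justifying that the component CSFs can be recovered as a multiset — i.e., that the factorization of $X(G,\boldsymbol{x})$ in the UFD $\Lambda$ matches the graph-component factorization rather than some spurious refinement (a connected graph could in principle have reducible CSF); one sidesteps this by instead detecting components via a connectivity invariant extracted from $X(G,\boldsymbol{x})$, e.g.\ from the coefficient expansion in the augmented monomial or power-sum basis, where the "number of connected components" appears as the exponent of $p_1$ or analogous data. Second, writing down the join operator explicitly and proving its injectivity on the relevant graded pieces; the cleanest route is to show that appending a universal vertex corresponds, after applying $\omega$ (which sends the CSF to a nonnegative combination related to acyclic orientations), to a simple multiplicative or shift operation whose invertibility is transparent. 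Everything else — the structural lemma that connected trivially perfect graphs have a universal vertex, the reduction, and the induction bookkeeping — is routine given Theorem \ref{FISC}\eqref{FISC2} and the definition via rules (\ref{rule 1},\ref{rule 3},\ref{rule 4}).
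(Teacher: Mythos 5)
Your outline coincides with the paper's: induct on $|V_G|$, use unique factorization in $\Sym_{\mathbb{Q}}$ to match up connected components, and in the connected case peel off a universal vertex and cancel it. But the two steps you yourself flag as ``the hard part'' are exactly where the argument lives, and the routes you sketch for them would not close the gaps. First, component recovery: you correctly note that a priori a connected graph might have reducible chromatic symmetric function, but your proposed sidestep --- reading a connectivity invariant (e.g.\ an exponent of $p_1$) off the power-sum expansion --- at best recovers the \emph{number} of components, not the multiset of component chromatic symmetric functions, so it does not reduce the theorem to the connected case. The fact you actually need is that the chromatic symmetric function of a connected graph is irreducible in $\Sym_{\mathbb{Q}}$; this follows from the theorem of Cho and van Willigenburg that any choice of one connected graph per order yields an algebraically independent generating set of $\Sym_{\mathbb{Q}}$ (Theorem \ref{CvW} and Corollary \ref{CvW cor}). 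With that in hand, unique factorization does match the graph-theoretic component decomposition exactly, and no ad hoc invariant is needed.

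Second, the join step. You propose to realize ``join with $K_1$'' as an explicit invertible operator in the power-sum basis, possibly after applying $\omega$. There is no simple formula for $X(G+H,\boldsymbol{x})$ in terms of $X(G,\boldsymbol{x})$ and $X(H,\boldsymbol{x})$ in the power-sum basis, and $\omega$ does not produce one; this part of your plan would stall. The basis that works is the augmented monomial basis: since $X(G,\boldsymbol{x})=\sum_{\lambda}|\St_{\lambda}(G)|\tilde{m}_{\lambda}$ and every block of a stable partition of $G+H$ lies entirely in $G$ or entirely in $H$, one gets $X(G+H,\boldsymbol{x})=X(G,\boldsymbol{x})\odot X(H,\boldsymbol{x})$ for the product $\tilde{m}_{\lambda}\odot\tilde{m}_{\mu}\coloneqq\tilde{m}_{\lambda\uplus\mu}$ (Proposition \ref{stable partitions of join} and Lemma \ref{CSF join}). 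Because $(\Sym_{\mathbb{Q}},\odot)$ is a polynomial ring in the $\tilde{m}_{k}$, hence an integral domain, one may cancel $\tilde{m}_{1}=X(K_{1},\boldsymbol{x})$ from $X(G',\boldsymbol{x})\odot\tilde{m}_{1}=X(H',\boldsymbol{x})\odot\tilde{m}_{1}$ and invoke the induction hypothesis. So the architecture of your proof is right, but both pivotal lemmas need to be replaced by these specific statements before the argument is complete.
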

\begin{corollary}
The chromatic symmetric function distinguishes threshold graphs. 
\end{corollary}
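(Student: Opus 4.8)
Since every threshold graph is trivially perfect, the Corollary is immediate from Theorem~\ref{main theorem 1}; I therefore describe a strategy for Theorem~\ref{main theorem 1}. The plan is strong induction on $n=|V_{G}|$, using the recursive description behind rules (\ref{rule 1},\ref{rule 3},\ref{rule 4}): a trivially perfect graph $G$ is either $K_{1}$, or disconnected with $G=G_{1}\sqcup\cdots\sqcup G_{k}$ ($k\ge2$, each $G_{i}$ connected and trivially perfect), or connected with $n\ge2$ --- in which case, since such a $G$ must arise by rule (\ref{rule 3}), it has a universal (dominating) vertex $v$, so $G\cong H+K_{1}$ with $H\coloneqq G-v$ again trivially perfect (an induced subgraph of $G$, hence trivially perfect by Theorem~\ref{FISC}(\ref{FISC2})). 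Thus it suffices to recover from $X(G)$: in the disconnected case, the multiset $\{X(G_{i})\}$; in the connected case with $n\ge2$, the function $X(H)$. Applying the inductive hypothesis to the smaller graphs $G_{i}$, resp.\ $H$, then reconstructs them up to isomorphism, hence reconstructs $G$; the case $n=1$ is trivial.

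For the disconnected case I would use that $\Lambda=\mathbb{Q}[p_{1},p_{2},\dots]$, the ring of symmetric functions, is a polynomial ring, hence a unique factorization domain, together with the multiplicativity $X(G_{1}\sqcup\cdots\sqcup G_{k})=\prod_{i}X(G_{i})$. The key sub-lemma is that $X(G')$ is \emph{irreducible} in $\Lambda$ for every connected graph $G'$: by Stanley's power-sum expansion $X(G')=\sum_{S\subseteq E_{G'}}(-1)^{|S|}p_{\lambda(S)}$ \cite{stanley1995symmetric-aim} (with $\lambda(S)$ the partition of component sizes of $(V_{G'},S)$), the coefficient of the single power sum $p_{|V_{G'}|}$ equals $\sum_{S\ \text{spanning connected}}(-1)^{|S|}$, which is $\pm T_{G'}(1,0)$ for the Tutte polynomial $T_{G'}$ and hence nonzero for connected $G'$ (classically $T_{G'}(1,0)\ge1$, counting spanning trees of external activity $0$); so $X(G')$ is, up to a nonzero scalar, $p_{|V_{G'}|}$ plus a polynomial in $p_{1},\dots,p_{|V_{G'}|-1}$, i.e.\ linear and primitive in $p_{|V_{G'}|}$, hence irreducible. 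Since the coefficient of the monomial $x_{1}x_{2}\cdots x_{m}$ in the chromatic symmetric function of any $m$-vertex graph is $m!$, chromatic symmetric functions of graphs that are associates in $\Lambda$ must coincide; therefore $\{X(G_{i})\}$ is exactly the multiset of irreducible factors of $X(G)$, and is thus determined.

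For the connected case with $n\ge2$ I would extract from the same expansion an explicit linear operator $\mathcal{U}\colon\Lambda\to\Lambda$, raising degree by one, with $X(H+K_{1})=\mathcal{U}(X(H))$: summing over which components $C_{1},\dots,C_{r}$ (of sizes $a_{1},\dots,a_{r}$) of a spanning subgraph of $H$ become linked to the new universal vertex yields
\[
\mathcal{U}(p_{\mu})=p_{1}p_{\mu}+\sum_{\emptyset\ne I\subseteq\{1,\dots,r\}}(-1)^{|I|}\,p_{1+\sum_{i\in I}a_{i}}\prod_{i\notin I}p_{a_{i}},\qquad\mu=(a_{1},\dots,a_{r}).
\]
Each term other than $p_{1}p_{\mu}$ has strictly fewer parts than $p_{1}p_{\mu}$, and multiplication by $p_{1}$ is injective on the integral domain $\Lambda$; filtering by the number of parts then shows $\mathcal{U}$ is injective. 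Hence $X(H)=\mathcal{U}^{-1}(X(G))$ is determined by $X(G)$, which lies in the image of $\mathcal{U}$ precisely because $G\cong H+K_{1}$.

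The step I expect to be the main obstacle is the connected case: pinning down the operator $\mathcal{U}$ and establishing its injectivity cleanly, i.e.\ controlling the power-sum expansion under adjunction of a universal vertex. A secondary technical point is the irreducibility sub-lemma, whose only non-elementary input is that $\sum_{S\ \text{spanning connected}}(-1)^{|S|}\ne0$ for connected graphs. The remaining ingredients --- the structure of trivially perfect graphs, multiplicativity under $\sqcup$, and the induction itself --- are routine.
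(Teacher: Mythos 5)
Your one-line derivation of the corollary (threshold $\Rightarrow$ trivially perfect, then apply Theorem~\ref{main theorem 1}) is exactly what the paper intends, so the corollary itself is fine. Your accompanying sketch of Theorem~\ref{main theorem 1} follows the same skeleton as the paper's proof --- induction on $|V_G|$, the disconnected case via unique factorization in $\Sym_{\mathbb{Q}}$ and irreducibility of $X$ of connected graphs, the connected case via injectivity of the ``adjoin a universal vertex'' operation --- but you substitute your own proofs of the two key technical inputs. For irreducibility, the paper cites Cho--van Willigenburg (Theorem~\ref{CvW}), whereas you argue directly from Stanley's power-sum expansion that the coefficient of $p_{n}$ is $(-1)^{n-1}T_{G'}(1,0)\neq 0$, making $X(G')$ linear in $p_n$ with unit leading coefficient; this is correct and essentially reproves the needed part of their theorem (and you rightly pin down the scalar ambiguity via the coefficient of $x_1\cdots x_m$, a point the paper glosses over). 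For the connected case, the paper introduces the product $\tilde m_\lambda \odot \tilde m_\mu = \tilde m_{\lambda\uplus\mu}$ on the augmented monomial basis, proves $X(G+H)=X(G)\odot X(H)$ for arbitrary joins, and cancels $\tilde m_1$ in the integral domain $(\Sym_{\mathbb{Q}},\odot)$; your operator $\mathcal{U}$ on power sums, with injectivity via the filtration by number of parts, is a correct but more hands-on alternative specific to adjoining a single vertex. The paper's $\odot$ formalism buys more (it handles general joins, which is also what produces the cograph counterexample in Section~\ref{Subsec:CSF cannot distinguish cographs}), while your route is self-contained and avoids introducing the auxiliary algebra structure.
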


However, the chromatic symmetric function cannot distinguish cographs. 
We will give the smallest counter example (see Subsection \ref{Subsec:CSF cannot distinguish cographs}). 

To state the other main theorem, we will define $ e $-positivity of graphs. 
An \textbf{integer partition} $ \lambda $ is a finite multiset consisting of positive integers. 
We write an integer partition as $ \langle 1^{r_{1}} \, 2^{r_{2}}, \dots \rangle $, where $ r_{i} $ is the multiplicity of $ i $. 
If $ \lambda \neq \varnothing $ (the empty set), we may write $ \lambda $ as a non-increasing sequence $ (\lambda_{1}, \dotsm \lambda_{\ell}) $ of positive integers. 
We call $ \ell $ the \textbf{length} of $ \lambda $. 

For a positive integer $ k $, we define the elementary symmetric function $ e_{k} $ to be 
\begin{align*}
e_{k} \coloneqq \sum_{i_{1}< \dots < i_{k}}x_{i_{1}} \cdots x_{i_{k}}. 
\end{align*}
Moreover, given an integer partition $ \lambda = (\lambda_{1}, \dots, \lambda_{\ell}) $, define $ e_{\lambda} $ to be 
\begin{align*}
e_{\lambda} \coloneqq e_{\lambda_{1}} \cdots e_{\lambda_{\ell}}
\end{align*}
and $ e_{\varnothing} \coloneqq 1 $. 
It is well known that $ \{e_{\lambda}\}_{\lambda} $ forms a basis for the vector space of symmetric functions over $ \mathbb{Q} $. 
There is another well-known basis $ \{s_{\lambda}\}_{\lambda} $, where $ s_{\lambda} $ denotes the Schur function (we omit the definition in this paper). 

A simple graph is called \textbf{$ e $-positive} (resp. \textbf{$ s $-positive}) if its chromatic symmetric function can be written as non-negative linear combination of elementary symmetric functions (resp. Schur functions). 
It is known that $ e $-positivity implies $ s $-positivity. 

Stanley and Stembridge  (\cite[Conjecture 5.5]{stanley1993immanants-joctsa} and \cite[Conjecture 5.1]{stanley1995symmetric-aim}) have conjectured that the incomparability graph of $ (\boldsymbol{3}+\boldsymbol{1}) $-free poset is $ e $-positive. 
Gasharov \cite[Theorem 2]{gasharov1996incomparability-dm} gave a weaker result: the incomparability graph of $ (\boldsymbol{3}+\boldsymbol{1}) $-free poset is $ s $-positive. 

The \textbf{claw graph} is a complete bipartite graph $ K_{1,3} $ (see Figure \ref{Fig:claw}). 
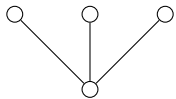
\begin{figure}[t]
\centering
\begin{tikzpicture}
\draw (0,0) node[v](0){};
\draw (-1,1) node[v](1){};
\draw ( 0,1) node[v](2){};
\draw ( 1,1) node[v](3){};
\draw (0)--(1);
\draw (0)--(2);
\draw (0)--(3);
\end{tikzpicture}
\caption{The claw graph $ K_{1,3} $}\label{Fig:claw}
\end{figure}
A $ K_{1,3} $-free graph is called \textbf{claw-free}. 
Note that every incomparability graph of $ (\boldsymbol{3}+\boldsymbol{1}) $-free poset is claw-free. 
Gasharov has conjectured that every claw-free graph is $ s $-positive, which is stated in Stanley's paper \cite[Conjecture 1.4]{stanley1998graph-dm}. 

The complete graph $ K_{n} $ is $ e $-positive since $ X(K_{n},\boldsymbol{x})=n!e_{n} $. 
The edgeless graph $ \overline{K}_{n} $ is also $ e $-positive since $ X(\overline{K}_{n})=e_{1}^{n} $. 
Path graphs and cycle graphs are also known to be $ e $-positive (\cite[Proposition 5.3, Proposition 5.4]{stanley1995symmetric-aim}). 
To prove our second theorem, we need the following lemma. 
\begin{lemma}[{\cite[Excercise 7.47j]{stanley1999enumerative}}]
If the complement of a simple graph $ G $ is $ K_{3} $-free, then $ G $ is $ e $-positive. 
\end{lemma}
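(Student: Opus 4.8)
The plan is to make the $ e $-expansion of $ X(G,\boldsymbol{x}) $ visible by an explicit computation. Put $ n \coloneqq |V_{G}| $. Saying that $ \overline{G} $ is $ K_{3} $-free is the same as saying that $ G $ has no independent set of size $ 3 $, so in every proper coloring of $ G $ each color class has size $ 1 $ or $ 2 $. Recall Stanley's standard expansion $ X(G,\boldsymbol{x}) = \sum_{\pi} \tilde{m}_{\lambda(\pi)} $, where $ \pi $ runs over the set partitions of $ V_{G} $ all of whose blocks are independent sets, $ \lambda(\pi) $ records the block sizes, and $ \tilde{m}_{\lambda} \coloneqq \sum_{(i_{1},\dots,i_{\ell})} x_{i_{1}}^{\lambda_{1}} \cdots x_{i_{\ell}}^{\lambda_{\ell}} $ is the augmented monomial symmetric function, the sum being over all $ \ell $-tuples of pairwise distinct positive integers and $ \ell $ the length of $ \lambda = (\lambda_{1},\dots,\lambda_{\ell}) $. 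Under our hypothesis such a $ \pi $ is nothing but a matching $ M $ of $ \overline{G} $ (the two-element blocks) together with the unmatched vertices as singletons, so $ \lambda(\pi) = (2^{|M|},1^{n-2|M|}) $, and therefore
\begin{align*}
X(G,\boldsymbol{x}) = \sum_{b \ge 0} \mu_{b}(\overline{G})\, \tilde{m}_{(2^{b},1^{n-2b})},
\end{align*}
where $ \mu_{b}(\overline{G}) $ is the number of $ b $-edge matchings of $ \overline{G} $.

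The second step is to expand each $ \tilde{m}_{(2^{b},1^{n-2b})} $ in the elementary basis. The key is the factorization $ 1 + xt + x^{2}s = (1+\alpha x)(1+\beta x) $, which holds exactly when $ t = \alpha + \beta $ and $ s = \alpha\beta $. Taking the product over all the variables $ x_{i} $ gives
\begin{align*}
\prod_{i} \bigl(1 + x_{i} t + x_{i}^{2} s\bigr) = \Bigl( \sum_{k \ge 0} e_{k}\alpha^{k} \Bigr) \Bigl( \sum_{l \ge 0} e_{l}\beta^{l} \Bigr),
\end{align*}
whereas expanding the left-hand side directly yields $ \sum_{a,b \ge 0} \frac{1}{a!\,b!}\,\tilde{m}_{(2^{b},1^{a})}\, t^{a} s^{b} $. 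Comparing the parts that are homogeneous of degree $ n $ in $ \boldsymbol{x} $, and rewriting each $ \alpha^{j} + \beta^{j} $ as a polynomial in $ t $ and $ s $ by the Girard--Waring formula, one gets an identity of the shape $ \tilde{m}_{(2^{b},1^{n-2b})} = (n-2b)!\, b!\sum_{0 \le k \le b}(-1)^{b-k}\gamma_{n,b,k}\, e_{k}e_{n-k} $ with positive rationals $ \gamma_{n,b,k} $ (for example $ \tilde{m}_{(2,1^{n-2})} = (n-2)!\,(e_{1}e_{n-1} - n\,e_{n}) $). Substituting into the formula above and interchanging the two summations gives
\begin{align*}
X(G,\boldsymbol{x}) = \sum_{0 \le k \le n/2} c_{k}\, e_{k}e_{n-k},
\end{align*}
where $ c_{k} $ is an explicit alternating sum of the matching numbers $ \mu_{b}(\overline{G}) $.

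It remains to prove that every $ c_{k} $ is nonnegative, and this is the heart of the matter; it is also where $ K_{3} $-freeness of $ \overline{G} $ must really be used, and not merely the bound it gives on the sizes of color classes, since already for $ \overline{G} = K_{3} $ the resulting coefficient of $ e_{3} $ is negative. I see two possible routes. One is to work from the closed form: writing $ (n-2b)! = \int_{0}^{\infty} t^{n-2b}e^{-t}\,dt $ converts $ c_{k} $ into an integral against $ e^{-t} $ of a polynomial built from the matching polynomial $ \mu(\overline{G},x) = \sum_{b}(-1)^{b}\mu_{b}(\overline{G})\,x^{n-2b} $, and one hopes to control the sign through the real-rootedness of $ \mu(\overline{G},x) $ (Heilmann--Lieb) and the ensuing interlacing. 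The other, which I would attempt first, is induction on $ |E_{\overline{G}}| $: if $ \overline{G} $ contains an induced path on three vertices $ a $--$ b $--$ c $, let $ G_{1},G_{2},G_{3} $ be obtained from $ G $ by adjoining the edge $ \{a,b\} $, the edge $ \{b,c\} $, and both, respectively; since $ \{a,c\} $ is an edge of each of $ G,G_{1},G_{2},G_{3} $, the triple-deletion (modular) relation of Orellana and Scott gives
\begin{align*}
X(G,\boldsymbol{x}) = X(G_{1},\boldsymbol{x}) + X(G_{2},\boldsymbol{x}) - X(G_{3},\boldsymbol{x}),
\end{align*}
and each of $ \overline{G_{1}},\overline{G_{2}},\overline{G_{3}} $ is $ K_{3} $-free with strictly fewer edges than $ \overline{G} $; the base case, where $ \overline{G} $ has no induced path on three vertices — so $ \overline{G} $ is a disjoint union of edges and isolated vertices and $ G $ is complete multipartite with all parts of size at most two — is handled by a direct computation from the expansion above. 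In either approach the difficulty is the cancellation: in the inductive version one must show that the subtracted term never dominates the sum of the other two coefficientwise in the $ e $-basis, and proving that coefficientwise inequality is what I expect to be the real obstacle.
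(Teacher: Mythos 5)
This lemma is one the paper does not prove at all: it is imported as a known result, cited to Stanley's \emph{Enumerative Combinatorics} Exercise 7.47(j), so there is no in-paper argument to compare against. Judged on its own terms, your proposal is a correct and well-executed setup followed by an explicit admission that the decisive step is missing. The first two stages are fine: since $\overline{G}$ is $K_{3}$-free, every stable partition of $G$ has blocks of size at most $2$, so by the $\tilde{m}$-expansion $X(G,\boldsymbol{x})=\sum_{b}\mu_{b}(\overline{G})\,\tilde{m}_{(2^{b},1^{n-2b})}$ where $\mu_{b}$ counts $b$-edge matchings of $\overline{G}$; and your generating-function identity correctly shows each $\tilde{m}_{(2^{b},1^{n-2b})}$ lies in the span of $\{e_{k}e_{n-k}\}$, with coefficients of alternating sign. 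But the lemma is precisely the assertion that the resulting alternating sums $c_{k}$ are nonnegative, and your proposal never establishes this. As your own example with $\overline{G}=K_{3}$ shows, the linear map from matching sequences to $e$-coefficients is not positivity-preserving, so some genuine combinatorial input about matchings of triangle-free graphs is unavoidable, and neither of your two suggested routes supplies it: the Heilmann--Lieb route is only a hope that real-rootedness ``controls the sign,'' and the Orellana--Scott induction expresses $X(G,\boldsymbol{x})$ as $X(G_{1},\boldsymbol{x})+X(G_{2},\boldsymbol{x})-X(G_{3},\boldsymbol{x})$, a \emph{signed} combination from which $e$-positivity of the three terms on the right yields nothing about the left-hand side without exactly the coefficientwise inequality you flag as the obstacle.

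To close the gap one has to make the $c_{k}$ fully explicit and prove the needed inequality among the matching numbers $\mu_{b}(\overline{G})$ of a triangle-free graph directly (this is how the cited exercise is solved: the coefficient of $e_{k}e_{n-k}$ is written as an explicit expression in the $\mu_{b}$, and its nonnegativity is reduced to a counting inequality for matchings that uses triangle-freeness, not merely the block-size bound). As it stands, your write-up is an accurate reduction of the lemma to that inequality, together with two candidate strategies, rather than a proof.
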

Our second main theorem is as follows. 
\begin{theorem}\label{main theorem 2}
Let $ G $ be a claw-free cograph, that is, a $ \{K_{1,3},P_{4}\} $-free graph. 
Then the complement $ \overline{G} $ is $ K_{3} $-free and hence $ G $ is $ e $-positive. 
\end{theorem}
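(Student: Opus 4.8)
The plan is to show the contrapositive of the first claim: if $\overline{G}$ contains a triangle $K_{3}$, then $G$ contains an induced $K_{1,3}$ or an induced $P_{4}$, so $G$ fails to be a claw-free cograph. A triangle in $\overline{G}$ is precisely an independent set $\{a,b,c\}$ of size three in $G$, so the real content is: \emph{every $\{K_{1,3},P_{4}\}$-free graph has independence number at most $2$}. Once that is established, $\overline{G}$ is $K_{3}$-free, and the $e$-positivity of $G$ follows immediately from the quoted Lemma (\cite[Exercise 7.47j]{stanley1999enumerative}).

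To prove the independence bound, I would argue by contradiction: suppose $G$ is a claw-free cograph with an independent set $S=\{a,b,c\}$. Since cographs are closed under induced subgraphs and $G$ has more than one vertex, I would use the cograph structure — more precisely, the fact (Theorem \ref{FISC}(\ref{FISC3})) that a cograph on $\geq 2$ vertices is either a disjoint union $G_{1}\sqcup G_{2}$ or a join $G_{1}+G_{2}$ of smaller cographs — and induct on $|V_{G}|$. If $G=G_{1}+G_{2}$ is a join, then any independent set lies entirely inside $G_{1}$ or entirely inside $G_{2}$ (a vertex from each side would be adjacent), so $S$ sits in one factor, which is again a claw-free cograph with a smaller vertex set, and induction applies. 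If $G=G_{1}\sqcup G_{2}$ is a disjoint union, then the three vertices of $S$ are distributed among the two components; if all three lie in one component we are done by induction, and if they split $2+1$, say $a,b\in G_{1}$ and $c\in G_{2}$, then I need to derive a claw or $P_{4}$. Here I would pick a shortest path $P$ inside the component $G_{1}$ joining $a$ and $b$ (it exists because the counterexample can be taken connected at the bottom of the recursion, or one handles the disconnected base case separately): if $a,b$ are non-adjacent, $P$ has length $\geq 2$, and together with $c$ one can locate either an induced $P_{4}$ (if $P$ itself has length $\geq 3$, or using $c$ to extend a $P_{3}$) or an induced claw (when the internal structure of $G_1$ around $a,b$ forces a degree-$3$ vertex with three pairwise non-adjacent neighbours, $c$ serving as one of the three leaves via the disjoint union).

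The cleanest way to package the case analysis is probably to reduce directly to a connected claw-free cograph $H$ with independence number $\geq 2$ and show it has independence number exactly $2$ is impossible to exceed: a connected cograph on $\geq 2$ vertices is a join $H_1+H_2$, independent sets live in a single factor, and one recurses until hitting a single vertex, whence any independent set that uses two different components at some disjoint-union stage has size at most $2$ — the key observation being that at the first stage the graph splits as a disjoint union, each side is a connected (claw-free) cograph contributing at most one vertex to a ``cross'' independent set is false, so instead the bound must come from: two vertices in the same component already consume the unique ``free slot,'' and a third vertex, wherever it lies, is either adjacent to one of them (contradicting independence) or produces a forbidden induced subgraph. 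I expect the main obstacle to be precisely this bookkeeping — verifying that in the mixed $2+1$ disjoint-union case one genuinely always finds an induced $K_{1,3}$ or $P_{4}$, rather than slipping through. The trick is that if $a,b$ lie in a connected cograph component $G_{1}$ and are non-adjacent, then $G_{1}$ — being a connected cograph — is a join, so there is a vertex $d\in V_{G_{1}}$ adjacent to both $a$ and $b$; if $d$ has any non-neighbour $d'$ in $G_1$ we may push toward a $P_4$ or claw, and since $G_1$ connected forces the join structure recursively, one shows $\{a,b,d,c\}$ or a suitable four-vertex subset is induced $P_4$ — that $a\!-\!d\!-\!b$ with $c$ disjoint from all of $G_1$ gives, depending on whether a further common neighbour exists, either $K_{1,3}$ centered at $d$ with leaves $a,b$, and a third leaf obtained inside $G_1$, or an induced $P_4$; since $G$ is claw-free we are forced into $P_4$, contradicting $P_4$-freeness. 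This contradiction finishes the proof, and then $e$-positivity is immediate from the Lemma.
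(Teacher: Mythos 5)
There is a genuine gap, and it sits exactly where you feared: the mixed $2{+}1$ case of your disjoint-union step cannot be closed, because the statement you are trying to prove there --- \emph{every $\{K_{1,3},P_{4}\}$-free graph has independence number at most $2$} --- is false. The graph $3K_{1}$ is a claw-free cograph with an independent set of size $3$ (and $\overline{3K_{1}}=K_{3}$), and so is $P_{3}\sqcup K_{1}$: writing the path as $a\!-\!d\!-\!b$ and letting $c$ be the isolated vertex, the set $\{a,b,c\}$ is independent, yet the four vertices $\{a,d,b,c\}$ induce $P_{3}\sqcup K_{1}$ itself, which is neither a claw nor a $P_{4}$; the centre $d$ has no third leaf and no induced $P_{4}$ exists anywhere. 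So the forbidden subgraph you hope to extract in the $2{+}1$ split genuinely does not exist, and no bookkeeping will produce it. (Indeed the first assertion of the theorem, that $\overline{G}$ is $K_{3}$-free, is only true for connected $G$; it must be proved \emph{after} a reduction to the connected case, not used to handle disconnected graphs.)

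The correct repair --- and the route the paper takes --- is to dispose of disconnectedness before saying anything about triangles in the complement: since $X(G\sqcup H,\boldsymbol{x})=X(G,\boldsymbol{x})X(H,\boldsymbol{x})$ (Proposition \ref{Stanley CSF disjoint}) and a product of $e$-positive symmetric functions is $e$-positive, it suffices to treat connected (and non-complete) $G$. There your join-case instinct is the right one, but run on the coconnected decomposition $G=G_{1}+\dots+G_{n}$ with $n\geq 2$: if some $G_{i}$ contains three pairwise non-adjacent vertices (for instance three vertices in three distinct connected components of $G_{i}$, or two non-adjacent vertices inside one component together with a vertex of another component), then joining them to any vertex outside $G_{i}$ yields an induced claw. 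Hence each $G_{i}$ is $K_{1}$ or a disjoint union of two complete graphs, and among any three vertices of $G$ either two lie in different coconnected components (hence are adjacent) or all three lie in one $G_{i}$ and two of them share a complete part. So keep your join case, but replace the disjoint-union case of your induction by the multiplicativity of $e$-positivity.
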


This paper is organized as follows. 
In Section \ref{Sec:preliminaries}, we review a few basic concepts of the ring of symmetric functions and investigate properties of chromatic symmetric functions. 
In Section \ref{Sec:discrimination}, we give a proof of Theorem \ref{main theorem 1} and the counter example for cographs. 
In Section \ref{Sec:e-positivity}, we prove Theorem \ref{main theorem 2}.

\section{Preliminaries}\label{Sec:preliminaries}
\subsection{The ring of symmetric functions}
In this subsection, we review some basic concepts on the theory of symmetric functions. 
Our standard reference is \cite{macdonald1995symmetric}. 
Recall that $ \boldsymbol{x} = (x_{1},x_{2}, \dots) $ denotes infinitely many indeterminates. 
A formal series $ f \in \mathbb{Q}[[\boldsymbol{x}]] $ is called a \textbf{symmetric function} if the following conditions are satisfied. 
\begin{enumerate}[(i)]
\item The degrees of the monomials of $ f $ are bounded. 
\item $ f $ is invariant under any permutation of the indeterminates. 
\end{enumerate}
Let $ \Sym_{\mathbb{Q}} $ denote the subset of the symmetric functions. 
It is well known that $ \Sym_{\mathbb{Q}} $ is a subring of $ \mathbb{Q}[[\boldsymbol{x}]] $, which is called the \textbf{ring of symmetric functions}.

For every integer partition $ \lambda $, we associate it with the \textbf{monomial symmetric function} $ m_{\lambda} $, defined by 
\begin{align*}
m_{\lambda} \coloneqq \sum_{\alpha} \prod_{i=1}^{\infty}x_{i}^{\alpha_{i}},
\end{align*}
where $ \alpha=(\alpha_{1},\alpha_{2}, \dots) $ runs over all distinct rearrangements of $ \lambda $ considered as a sequence $ (\lambda_{1}, \dots, \lambda_{\ell}, 0, \dots) $ of non-negative integers. 
Moreover, we define the \textbf{augmented monomial symmetric function} $ \tilde{m}_{\lambda} $ to be 
\begin{align*}
\tilde{m}_{\lambda} \coloneqq \left(\prod_{i=1}^{\infty}r_{i}!\right)m_{\lambda},  
\end{align*}
where $ r_{i} $ denotes the multiplicity of $ i $ in $ \lambda $, that is $ \lambda=\langle 1^{r_{1}}, 2^{r_{2}}, \dots \rangle $. 
For the empty partition, define $ \tilde{m}_{\varnothing} \coloneqq 1 $. 
It is easy to show that the set $ \{\tilde{m}_{\lambda}\}_{\lambda} $ forms a linear basis for $ \Sym_{\mathbb{Q}} $ over $ \mathbb{Q} $. 

As with the case of symmetric polynomials (in finite indeterminates), the ring of symmetric function $ \Sym_{\mathbb{Q}} $ is a free commutative algebra, that is, there exists a system of symmetric functions $ \{f_{k}\}_{k \in \mathbb{N}} $ which is algebraically independent over $ \mathbb{Q} $ such that $ \Sym_{\mathbb{Q}}=\mathbb{Q}[f_{k} \mid k \in \mathbb{N}] $. 
One of those systems is the system $ \{e_{k}\}_{k \in \mathbb{N}} $ of the elementary symmetric functions. 
Another well-known system is the system
$ \{p_{k}\}_{k \in \mathbb{N}} $ of \textbf{power sum symmetric functions}, defined by 
\begin{align*}
p_{k} \coloneqq \tilde{m}_{k} =  \sum_{i=1}^{\infty}x_{i}^{k}. 
\end{align*}
We also define $ p_{\lambda} \coloneqq p_{\lambda_{1}} \cdots p_{\lambda_{\ell}} $ for an integer partition $ \lambda=(\lambda_{1},\dots,\lambda_{\ell}) $ and $ p_{\varnothing} \coloneqq 1 $. 
Note that the set $ \{p_{\lambda}\}_{\lambda} $ forms a $ \mathbb{Q} $-basis for $ \Sym_{\mathbb{Q}} $.

\subsection{Chromatic symmetric functions}
In this subsection, we review some properties of chromatic symmetric functions and prepare to prove our main theorem. 

For each simple graph $ G $, it is well known that there exists a polynomial $ \chi(G,t) \in \mathbb{Z}[t] $ such that 
\begin{align*}
\chi(G,n) = |\Hom(G,K_{n})| \text{ for all } n \in \mathbb{N}.  
\end{align*}
The polynomial $ \chi(G,t) $ is called the \textbf{chromatic polynomial} of $ G $. 
From the definition of the chromatic symmetric function, we have 
\begin{align*}
X(G,\boldsymbol{1}^{n}) = \chi(G,n) \text{ for all } n \in \mathbb{N}, \text{ where } \boldsymbol{1}^{n} \coloneqq (\underbrace{1, \dots, 1}_{n}, 0, \dots). 
\end{align*}

Recall that every symmetric function is represented by a polynomial in the power sum symmetric functions. 
Define a ring homomorphism $ \varepsilon_{p} \colon \Sym_{\mathbb{Q}} \to \mathbb{Q}[t] $ by the extension of $ \varepsilon_{p}(p_{k}) \coloneqq t $. 
\begin{proposition}
Given a simple graph $ G $, we have 
\begin{align*}
\varepsilon_{p}(X(G,\boldsymbol{x})) = \chi(G,t). 
\end{align*}
\end{proposition}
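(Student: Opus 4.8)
The plan is to prove the identity by evaluating both sides using the fact that $\varepsilon_p$ is a ring homomorphism and that the chromatic symmetric function has an explicit expansion in the power sum basis. First I would recall (or derive) Stanley's power sum expansion of $X(G,\boldsymbol{x})$: for a simple graph $G$ with edge set $E_G$, one has
\begin{align*}
X(G,\boldsymbol{x}) = \sum_{S \subseteq E_G} (-1)^{|S|} p_{\lambda(S)},
\end{align*}
where $\lambda(S)$ is the integer partition whose parts are the sizes of the connected components of the spanning subgraph $(V_G, S)$. This is the symmetric-function refinement of Whitney's theorem and follows from inclusion--exclusion over the edges together with the observation that $\prod_v x_{\kappa(v)}$ summed over all (not necessarily proper) colorings constant on each component of $(V_G,S)$ gives exactly $p_{\lambda(S)}$.

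Next I would apply $\varepsilon_p$ termwise. Since $\varepsilon_p$ is a ring homomorphism sending every $p_k$ to $t$, it sends $p_{\lambda(S)} = p_{\mu_1}\cdots p_{\mu_\ell}$ to $t^{\ell}$, where $\ell = \ell(\lambda(S))$ is the number of connected components of $(V_G,S)$. Writing $c(S)$ for that number of components, we get
\begin{align*}
\varepsilon_p(X(G,\boldsymbol{x})) = \sum_{S \subseteq E_G} (-1)^{|S|} t^{c(S)}.
\end{align*}
The right-hand side is precisely Whitney's subset-expansion formula for the chromatic polynomial $\chi(G,t)$, so the two agree as polynomials in $t$. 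If one prefers not to invoke Whitney's formula as a black box, one can instead verify the identity by checking it at all positive integers $n$: both $\varepsilon_p(X(G,\boldsymbol{x}))$ and $\chi(G,t)$ take the value $|\Hom(G,K_n)|$ at $t=n$, using $X(G,\boldsymbol{1}^n) = \chi(G,n)$ established just above together with the fact that specializing $p_k$ at $\boldsymbol{1}^n$ gives $n$; since two polynomials agreeing at infinitely many points are equal, this finishes the proof.

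The main obstacle, such as it is, is making sure the power sum expansion of $X(G,\boldsymbol{x})$ is available and correctly stated, since the excerpt has not yet recorded it; the cleanest route is probably the second one, deducing everything from the already-proved specialization $X(G,\boldsymbol{1}^n)=\chi(G,n)$ and the elementary fact that $\varepsilon_p$ and the substitution $\boldsymbol{x} \mapsto \boldsymbol{1}^n$ are compatible in the sense that $\varepsilon_p(f)|_{t=n} = f(\boldsymbol{1}^n)$ for every $f \in \Sym_{\mathbb{Q}}$ (which holds because both are ring homomorphisms agreeing on the algebra generators $p_k$). That compatibility is the one small point that deserves an explicit sentence; everything else is a routine application of "a polynomial is determined by its values on $\mathbb{N}$."
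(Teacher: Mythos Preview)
Your proposal is correct, and the second route you settle on---checking that $\varepsilon_p(f)\big|_{t=n} = f(\boldsymbol{1}^n)$ for all $f\in\Sym_{\mathbb{Q}}$ via agreement on the generators $p_k$, then invoking $X(G,\boldsymbol{1}^n)=\chi(G,n)$ and the fact that polynomials agreeing on $\mathbb{N}$ are equal---is exactly the paper's proof (the paper just compresses it to one line: ``This follows by $p_k(\boldsymbol{1}^n)=n$ and the discussion above''). Your first route via Stanley's power sum expansion and Whitney's formula is a valid alternative that the paper does not take; it has the mild drawback you yourself point out, namely that the expansion has not yet been recorded in the text, whereas the specialization argument uses only what has already been established.
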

\begin{proof}
This follows by $ p_{k}(\boldsymbol{1}^{n})=n $ and the discussion above. 
\end{proof}

Every simple graph $ G $ has a decomposition $ G=G_{1}\sqcup \dots \sqcup G_{s} $ into the connected components. 
The chromatic symmetric function $ X(G,\boldsymbol{x}) $ is determined by the connected components of $ G $. 
\begin{proposition}[{\cite[Proposition 2.3]{stanley1995symmetric-aim}}]\label{Stanley CSF disjoint}
Let $ G,H $ be simple graphs. 
Then 
\begin{align*}
X(G \sqcup H, \boldsymbol{x}) = X(G, \boldsymbol{x})X(H, \boldsymbol{x}). 
\end{align*}
\end{proposition}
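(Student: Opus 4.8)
The plan is to exhibit a bijection between proper colorings of the disjoint union and pairs of proper colorings of the two pieces, and then to factor the weight monomial along that bijection. First I would observe that, since $ E_{G \sqcup H} = E_{G} \sqcup E_{H} $ contains no edge joining a vertex of $ V_{G} $ to a vertex of $ V_{H} $, a function $ \kappa \colon V_{G} \sqcup V_{H} \to \mathbb{N} $ is a proper coloring of $ G \sqcup H $ if and only if its restriction $ \kappa|_{V_{G}} $ is a proper coloring of $ G $ and its restriction $ \kappa|_{V_{H}} $ is a proper coloring of $ H $; the conditions imposed by edges of $ G $ and by edges of $ H $ are completely independent. This gives a bijection
\begin{align*}
\Hom(G \sqcup H, K_{\mathbb{N}}) \;\longrightarrow\; \Hom(G, K_{\mathbb{N}}) \times \Hom(H, K_{\mathbb{N}}), \qquad \kappa \mapsto (\kappa|_{V_{G}}, \kappa|_{V_{H}}),
\end{align*}
with inverse given by gluing a pair of colorings together (well-defined because $ V_{G} $ and $ V_{H} $ are disjoint).

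Next I would note that under this correspondence the weight monomial splits: for $ \kappa $ corresponding to $ (\kappa_{1}, \kappa_{2}) $,
\begin{align*}
\prod_{v \in V_{G \sqcup H}} x_{\kappa(v)} = \left( \prod_{v \in V_{G}} x_{\kappa_{1}(v)} \right)\left( \prod_{v \in V_{H}} x_{\kappa_{2}(v)} \right).
\end{align*}
Summing over all $ \kappa \in \Hom(G \sqcup H, K_{\mathbb{N}}) $, reindexing by the bijection, and applying the distributive law yields
\begin{align*}
X(G \sqcup H, \boldsymbol{x}) = \sum_{\kappa_{1}} \sum_{\kappa_{2}} \left( \prod_{v \in V_{G}} x_{\kappa_{1}(v)} \right)\left( \prod_{v \in V_{H}} x_{\kappa_{2}(v)} \right) = X(G, \boldsymbol{x}) X(H, \boldsymbol{x}),
\end{align*}
which is the claim.

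There is essentially no serious obstacle here; the only point requiring a word of care is that these are formal power series rather than polynomials, so one should remark that the interchange of the (infinite) sums and the factorization of the double sum into a product are legitimate — this is immediate since $ X(G,\boldsymbol{x}) $ and $ X(H,\boldsymbol{x}) $ are homogeneous of finite degree $ |V_{G}| $ and $ |V_{H}| $ respectively, so every monomial of $ X(G \sqcup H, \boldsymbol{x}) $ of degree $ |V_{G}| + |V_{H}| $ is obtained from finitely many terms. I would state this observation briefly and then conclude.
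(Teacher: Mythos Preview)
Your argument is correct, and there is nothing to compare it against: the paper does not supply its own proof of this proposition but simply cites it from Stanley's original paper. The bijection $\kappa \mapsto (\kappa|_{V_{G}},\kappa|_{V_{H}})$ together with the multiplicativity of the weight monomial is exactly the standard (and essentially only) way to see this identity, so your write-up would serve perfectly well as the missing proof.
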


Cho and van Willigenburg made generators of $ \Sym_{\mathbb{Q}} $ consisting of chromatic symmetric functions. 
\begin{theorem}[{\cite[Theorem 5]{cho2016chromatic-tejoc}}]\label{CvW}
Let $ \{G_{k}\}_{k \in \mathbb{N}} $ be a set of connected simple graphs $ G_{k} $ on $ k $ vertices. 
Then $ \Sym_{\mathbb{Q}}=\mathbb{Q}[X(G_{k},\boldsymbol{x}) \mid k \in \mathbb{N}] $ and $ \{X(G_{k},\boldsymbol{x})\}_{k \in \mathbb{N}} $ is algebraically independent over $ \mathbb{Q} $. 
\end{theorem}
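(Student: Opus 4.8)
The plan is to compare the system $\{X(G_{k},\boldsymbol{x})\}$ with the power sum basis $\{p_{k}\}$, for which both algebraic independence and generation of $\Sym_{\mathbb{Q}}$ are already known, and to show that passing from one to the other is a triangular, hence invertible, change of generators. The engine of the whole argument is a single nonvanishing statement, so I would isolate it first.

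\textbf{Key Lemma.} For a connected simple graph $G$ on $k$ vertices, write $X(G,\boldsymbol{x}) = \sum_{\lambda \vdash k} c_{\lambda} p_{\lambda}$ in the power sum basis; then the coefficient $c_{(k)}$ of the single-part partition is nonzero. To prove this I would apply the ring homomorphism $\varepsilon_{p}$ from the excerpt. Since $\varepsilon_{p}(p_{\lambda}) = t^{\ell(\lambda)}$ and $\varepsilon_{p}(X(G,\boldsymbol{x})) = \chi(G,t)$, we obtain $\chi(G,t) = \sum_{\lambda} c_{\lambda} t^{\ell(\lambda)}$, so the coefficient of $t^{1}$ in $\chi(G,t)$ is exactly $c_{(k)}$, because $\lambda = (k)$ is the only partition with a single part. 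It then remains to invoke the classical fact that the multiplicity of $0$ as a root of $\chi(G,t)$ equals the number of connected components of $G$; for connected $G$ this multiplicity is $1$, so $t \mid \chi(G,t)$ but $t^{2} \nmid \chi(G,t)$, i.e.\ $[t^{1}]\chi(G,t) \neq 0$. This is the one place where connectivity is indispensable. Equivalently, one may expand $X(G,\boldsymbol{x})$ over spanning subgraphs (Whitney) and reduce to a signed count of connected spanning subgraphs.

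Granting the lemma, set $c_{k} \coloneqq c_{(k)} \neq 0$ for each $k$. Because every partition of $k$ other than $(k)$ has at least two parts, each of size at most $k-1$, the expansion reads
\[
X(G_{k},\boldsymbol{x}) = c_{k} p_{k} + Q_{k}(p_{1}, \dots, p_{k-1}),
\]
where $Q_{k}$ is a polynomial in the lower power sums; this is triangular with respect to the degree grading. I would then prove generation by induction on $k$: for $k=1$ we have $p_{1} = c_{1}^{-1} X(G_{1},\boldsymbol{x})$, and assuming $p_{1}, \dots, p_{k-1}$ all lie in $\mathbb{Q}[X(G_{1},\boldsymbol{x}), \dots, X(G_{k-1},\boldsymbol{x})]$, the identity $p_{k} = c_{k}^{-1}\bigl(X(G_{k},\boldsymbol{x}) - Q_{k}(p_{1},\dots,p_{k-1})\bigr)$ places $p_{k}$ in $\mathbb{Q}[X(G_{1},\boldsymbol{x}),\dots,X(G_{k},\boldsymbol{x})]$. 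Since $\Sym_{\mathbb{Q}} = \mathbb{Q}[p_{k} \mid k \in \mathbb{N}]$, this yields $\Sym_{\mathbb{Q}} = \mathbb{Q}[X(G_{k},\boldsymbol{x}) \mid k \in \mathbb{N}]$.

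For algebraic independence I would package the computation as an endomorphism. Define the graded $\mathbb{Q}$-algebra homomorphism $\phi \colon \Sym_{\mathbb{Q}} \to \Sym_{\mathbb{Q}}$ by $\phi(p_{k}) \coloneqq X(G_{k},\boldsymbol{x})$; this is well defined because the $p_{k}$ are free polynomial generators, and it preserves the grading since $X(G_{k},\boldsymbol{x})$ is homogeneous of degree $k$. The generation argument just established shows that $\phi$ is surjective. Restricting to the degree-$n$ component, $\phi$ becomes a surjective linear endomorphism of the finite-dimensional space $\Sym_{\mathbb{Q}}^{n}$, hence bijective, so $\phi$ is a graded algebra automorphism. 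As an automorphism carries an algebraically independent set to an algebraically independent set, $\{X(G_{k},\boldsymbol{x})\} = \{\phi(p_{k})\}$ is algebraically independent, completing the proof. The main obstacle is the Key Lemma; the surrounding argument is formal linear algebra once the nonvanishing of $c_{(k)}$ is in hand.
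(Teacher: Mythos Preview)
The paper does not give its own proof of this statement; Theorem~\ref{CvW} is simply quoted from \cite[Theorem~5]{cho2016chromatic-tejoc} and used as a black box to derive Corollary~\ref{CvW cor}. So there is nothing in the paper to compare your argument against.

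That said, your proof is correct and is essentially the standard triangularity argument that underlies the cited result. The Key Lemma is the heart of the matter, and your derivation of it is clean: applying $\varepsilon_{p}$ collapses the power-sum expansion to $\chi(G,t)=\sum_{\lambda}c_{\lambda}t^{\ell(\lambda)}$, so $c_{(k)}=[t^{1}]\chi(G,t)$, and connectivity of $G$ forces this to be nonzero because $0$ is a simple root of $\chi(G,t)$. One could equally reach the same nonvanishing via Stanley's spanning-subgraph expansion $X(G,\boldsymbol{x})=\sum_{S\subseteq E_{G}}(-1)^{|S|}p_{\lambda(S)}$, where the coefficient of $p_{(k)}$ is a signed count of connected spanning subgraphs; your chromatic-polynomial route is arguably more transparent. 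The induction for generation is straightforward, and the automorphism packaging for algebraic independence (surjective graded endomorphism of a locally finite-dimensional graded algebra is bijective) is a tidy way to finish.
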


In this paper, the following corollary is required. 
\begin{corollary}\label{CvW cor}
Let $ G $ be a simple graph. 
Then $ G $ is connected if and only if $ X(G,\boldsymbol{x}) $ is irreducible in $ \Sym_{\mathbb{Q}} $. 
\end{corollary}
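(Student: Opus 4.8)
The plan is to prove the two implications separately, invoking Proposition~\ref{Stanley CSF disjoint} for one direction and Theorem~\ref{CvW} for the other. I will use two standard facts about $\Sym_{\mathbb{Q}}$: it is an integral domain (it sits inside the power series domain $\mathbb{Q}[[\boldsymbol{x}]]$), and its only units are the nonzero constants in $\mathbb{Q}$, since a symmetric function of positive degree cannot have an inverse with bounded degrees.

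First I would handle the direction ``$G$ disconnected $\Rightarrow X(G,\boldsymbol{x})$ reducible''. Writing $G = G_{1} \sqcup G_{2}$ with both $G_{i}$ having at least one vertex, Proposition~\ref{Stanley CSF disjoint} gives $X(G,\boldsymbol{x}) = X(G_{1},\boldsymbol{x})\,X(G_{2},\boldsymbol{x})$. Each $X(G_{i},\boldsymbol{x})$ is homogeneous of degree $|V_{G_{i}}| \ge 1$, hence a non-unit, and $X(G,\boldsymbol{x})$ itself is a non-unit as $|V_{G}| \ge 2$; so $X(G,\boldsymbol{x})$ is not irreducible.

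For the converse, suppose $G$ is connected and put $n \coloneqq |V_{G}|$. I would complete $G$ to a family $\{G_{k}\}_{k \in \mathbb{N}}$ of connected graphs with $G_{k}$ on $k$ vertices and $G_{n} = G$ (taking, say, $G_{k} = P_{k}$ for $k \ne n$). By Theorem~\ref{CvW}, $\Sym_{\mathbb{Q}}$ is the polynomial ring $\mathbb{Q}[X(G_{k},\boldsymbol{x}) \mid k \in \mathbb{N}]$ on these algebraically independent generators. In a polynomial ring over a field, each free generator $t$ is prime, because the quotient by $(t)$ is again a polynomial ring, hence a domain; thus $X(G,\boldsymbol{x}) = X(G_{n},\boldsymbol{x})$ is prime, and being prime in the domain $\Sym_{\mathbb{Q}}$ it is irreducible.

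The argument is short and I do not anticipate a genuine obstacle; the one point that must be used carefully is that Theorem~\ref{CvW} is flexible enough to let us place the given connected graph $G$ as one of the free generators of $\Sym_{\mathbb{Q}}$, which is exactly what makes ``connected $\Rightarrow$ irreducible'' work.
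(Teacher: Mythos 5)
Your proof is correct and follows essentially the same route as the paper: the disconnected direction via Proposition~\ref{Stanley CSF disjoint}, and the connected direction by completing $G$ to a family of connected graphs and invoking Theorem~\ref{CvW} to realize $X(G,\boldsymbol{x})$ as a free polynomial generator of $\Sym_{\mathbb{Q}}$. The only (harmless) difference is the last step: you deduce irreducibility from the primality of a polynomial generator, whereas the paper derives a contradiction by noting a reducible $X(G,\boldsymbol{x})$ would be a polynomial in the lower-degree generators, violating algebraic independence.
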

\begin{proof}
If $ X(G,\boldsymbol{x}) $ is irreducible, then $ G $ is connected by Proposition \ref{Stanley CSF disjoint}. 
To show the converse, suppose that $ G $ is a connected graph on $ n $ vertices. 
Define a collection of graphs $ \{G_{k}\}_{k \in \mathbb{N}} $ by $ G_{n} \coloneqq G $ and $ G_{k} \coloneqq K_{k} $ for any $ k \neq n $. 
By Theorem \ref{CvW}, the set $ \{G_{k}\}_{k \in \mathbb{N}} $ is algebraically independent over $ \mathbb{Q} $ and generates $ \Sym_{\mathbb{Q}} $. 
Assume that $ X(G,\boldsymbol{x}) $ is reducible. 
Then $ X(G,\boldsymbol{x}) $ can be represented as a polynomial in $ \{X(G_{k},\boldsymbol{x})\}_{k < n} $, which is a contradiction. 
Therefore $ X(G,\boldsymbol{x}) $ is irreducible. 
\end{proof}

A set partition of the vertex set $ V_{G} $ of a simple graph $ G $ is a collection $ \pi = \{B_{1}, \dots, B_{\ell}\} $ of non-empty subsets of $ V_{G} $ such that $ B_{1} \sqcup \dots \sqcup B_{s} = V_{G} $. 
Every $ B_{i} $ is called a block. 
the \textbf{type} of a partition $ \pi $ is the integer partition $ \{|B_{1}|, \dots, |B_{\ell}|\} $, denoted by $ \type(\pi) $. 
A set partition is called \textbf{stable} if every block induces an edgeless subgraph of $ G $. 
Let $ \St_{\lambda}(G) $ denote the set of stable partitions of $ G $ of type an integer partition $ \lambda $. 
The chromatic symmetric function can be represented in terms of stable partitions. 

\begin{proposition}[{\cite[Proposition 2.4]{stanley1995symmetric-aim}}]\label{Stanley CSF stable partition}
Given a simple graph $ G $, we have 
\begin{align*}
X(G,\boldsymbol{x}) = \sum_{\lambda} |\St_{\lambda}(G)| \tilde{m}_{\lambda}, 
\end{align*}
where $ \lambda $ runs over all integer partitions. 
\end{proposition}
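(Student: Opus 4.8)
The plan is to reorganize the defining sum over proper colorings according to their color classes: each proper coloring of $G$ is nothing but a stable set partition of $V_G$ together with an injective assignment of colors to the blocks, and the inner sum over those assignments will turn out to be an augmented monomial symmetric function. First I would record a convenient combinatorial description of $\tilde{m}_\lambda$. Writing $\lambda=(\lambda_{1},\dots,\lambda_{\ell})$ with multiplicities $\lambda=\langle 1^{r_{1}}, 2^{r_{2}}, \dots \rangle$, I claim that
\[
\tilde{m}_{\lambda} = \sum_{(i_{1},\dots,i_{\ell})} x_{i_{1}}^{\lambda_{1}} \cdots x_{i_{\ell}}^{\lambda_{\ell}},
\]
where the sum runs over all $\ell$-tuples $(i_{1},\dots,i_{\ell})$ of \emph{pairwise distinct} positive integers. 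To verify this, fix a distinct rearrangement $\alpha$ of $\lambda$ and count the tuples that produce the monomial $\prod_{i} x_{i}^{\alpha_{i}}$: such a tuple is a bijection from the $\ell$ positions (carrying exponents $\lambda_{1},\dots,\lambda_{\ell}$) onto the support of $\alpha$ (carrying exponents $\alpha_{i}$) that matches equal exponents, and there are exactly $\prod_{i} r_{i}!$ of these. Hence every monomial of $m_{\lambda}$ occurs with coefficient $\prod_{i} r_{i}!$ in the right-hand side, which is precisely the defining factor of $\tilde{m}_{\lambda}$.

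Next I would set up the bijection. A proper coloring $\kappa \in \Hom(G,K_{\mathbb{N}})$ is the same data as a pair $(\pi,f)$, where $\pi=\{B_{1},\dots,B_{\ell}\}$ is a stable set partition of $V_{G}$ and $f \colon \pi \to \mathbb{N}$ is an \emph{injection}: given $\kappa$, let the blocks be the nonempty color classes $\kappa^{-1}(i)$ and let $f$ send each class to its color; conversely, given $(\pi,f)$, color every vertex of $B_{j}$ by $f(B_{j})$. Each color class is an independent set exactly because $\kappa$ is proper, so $\pi$ is stable; in the other direction, vertices sharing a block are nonadjacent by stability, while vertices in distinct blocks receive distinct colors by injectivity of $f$, so the reconstructed coloring is proper. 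Under this correspondence the monomial attached to $\kappa$ factors as $\prod_{v \in V_{G}} x_{\kappa(v)} = \prod_{j=1}^{\ell} x_{f(B_{j})}^{|B_{j}|}$.

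Finally I would assemble the identity. Fixing a stable partition $\pi$ of type $\lambda$ and summing the monomial over all injections $f$, then writing $i_{j}=f(B_{j})$, yields $\sum_{(i_{1},\dots,i_{\ell})} x_{i_{1}}^{|B_{1}|} \cdots x_{i_{\ell}}^{|B_{\ell}|}$ over distinct tuples, which equals $\tilde{m}_{\lambda}$ by the first step, since the block sizes $(|B_{1}|,\dots,|B_{\ell}|)$ are a rearrangement of $\lambda$ and the expansion above is insensitive to the order of the parts (permuting the parts permutes the distinct tuples bijectively). Summing over all stable partitions and grouping them by type then gives
\[
X(G,\boldsymbol{x}) = \sum_{\kappa} \prod_{v \in V_{G}} x_{\kappa(v)} = \sum_{\pi \text{ stable}} \tilde{m}_{\type(\pi)} = \sum_{\lambda} |\St_{\lambda}(G)|\, \tilde{m}_{\lambda}.
\]
The reindexing is a genuine relabeling of one and the same index set, and $X(G,\boldsymbol{x})$ is homogeneous of degree $|V_{G}|$, so no convergence issue arises. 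The one genuinely delicate point is the counting argument for $\tilde{m}_{\lambda}$: one must check that each monomial of $m_{\lambda}$ appears in the distinct-tuple sum with multiplicity exactly $\prod_{i} r_{i}!$, matching the augmentation factor. Everything else is the bookkeeping of the coloring/partition bijection.
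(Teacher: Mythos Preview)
The paper does not supply its own proof of this proposition; it simply cites it from Stanley \cite[Proposition 2.4]{stanley1995symmetric-aim} and moves on. So there is no in-paper argument to compare against.

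That said, your argument is correct and is essentially the standard one. The key identity $\tilde{m}_{\lambda} = \sum_{(i_{1},\dots,i_{\ell})\text{ distinct}} x_{i_{1}}^{\lambda_{1}}\cdots x_{i_{\ell}}^{\lambda_{\ell}}$ is exactly what makes the augmented monomials the natural basis here, and your multiplicity count (each monomial of $m_{\lambda}$ appears $\prod_{i} r_{i}!$ times because one may independently permute the positions carrying equal parts) is the right justification. The bijection between proper colorings and pairs $(\pi,f)$ with $\pi$ stable and $f$ injective is standard and you state it cleanly; the observation that the inner sum over injections is insensitive to the ordering of the block sizes (so that it equals $\tilde{m}_{\type(\pi)}$ regardless of how the blocks are listed) is the one subtlety, and you handle it. Nothing is missing.
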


This proposition may be considered as a generalization of the following proposition. 

\begin{proposition}[{\cite[Theorem 15]{read1968introduction-joct}}]\label{Read coeff falling factorial}
Given a simple graph $ G $, we have 
\begin{align*}
\chi(G,t) = \sum_{\ell=1}^{|V_{G}|}|\St_{\ell}(G)|(t)_{\ell}, 
\end{align*}
where $ \St_{\ell}(G) $ denotes the set of stable partitions of $ G $ consisting of $ \ell $ blocks and $ (t)_{\ell} \in \mathbb{Q}[t] $ denotes the falling factorial. 
Namely $ (t)_{\ell} \coloneqq t(t-1)\cdots(t-\ell+1) $. 
\end{proposition}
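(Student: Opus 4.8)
The plan is to obtain this statement as a specialization of Proposition \ref{Stanley CSF stable partition}, exactly as the surrounding text suggests. The excerpt has already established that $ \chi(G,n) = |\Hom(G,K_{n})| = X(G,\boldsymbol{1}^{n}) $ for every $ n \in \mathbb{N} $, where $ \boldsymbol{1}^{n} = (1,\dots,1,0,\dots) $ has $ n $ ones. Since $ \chi(G,t) $ is a polynomial and any two polynomials in $ \mathbb{Q}[t] $ that agree at all positive integers coincide, it suffices to verify the claimed identity after substituting $ t = n $ for every $ n \in \mathbb{N} $ and then read the result back as an identity of polynomials.

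The first step is to compute the effect of the principal specialization $ \boldsymbol{x} \mapsto \boldsymbol{1}^{n} $ on a single augmented monomial symmetric function. Writing $ \lambda = \langle 1^{r_{1}}\,2^{r_{2}} \cdots \rangle $ with length $ \ell = \ell(\lambda) $, I would observe that $ m_{\lambda}(\boldsymbol{1}^{n}) $ counts the distinct rearrangements of $ \lambda $, padded with zeros, that fit into $ n $ slots; this is the multinomial coefficient $ n! / \bigl((\prod_{i} r_{i}!)\,(n-\ell)!\bigr) $. Multiplying by the normalizing factor $ \prod_{i} r_{i}! $ from the definition of $ \tilde{m}_{\lambda} $ collapses this to
\begin{align*}
\tilde{m}_{\lambda}(\boldsymbol{1}^{n}) = \frac{n!}{(n-\ell)!} = n(n-1)\cdots(n-\ell+1) = (n)_{\ell}.
\end{align*}
Thus the augmented monomials are precisely the objects whose principal specializations are the falling factorials, indexed only by length.

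The second step is to feed this into Proposition \ref{Stanley CSF stable partition}. Evaluating $ X(G,\boldsymbol{x}) = \sum_{\lambda} |\St_{\lambda}(G)|\,\tilde{m}_{\lambda} $ at $ \boldsymbol{1}^{n} $ and applying the computation above yields $ \chi(G,n) = \sum_{\lambda} |\St_{\lambda}(G)|\,(n)_{\ell(\lambda)} $. Grouping the partitions $ \lambda $ of $ |V_{G}| $ by their common length and using that the stable partitions with exactly $ \ell $ blocks decompose as the disjoint union $ \St_{\ell}(G) = \bigsqcup_{\ell(\lambda)=\ell} \St_{\lambda}(G) $ over all types of length $ \ell $, the inner sums become $ |\St_{\ell}(G)| $, and we recover $ \chi(G,n) = \sum_{\ell=1}^{|V_{G}|} |\St_{\ell}(G)|\,(n)_{\ell} $. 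As this holds for every $ n $, the polynomial identity follows.

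I expect no serious obstacle; the only point requiring care is the bookkeeping in the first step, namely checking that the multinomial count of rearrangements is cancelled exactly by the factor $ \prod_{i} r_{i}! $, so that $ \tilde{m}_{\lambda}(\boldsymbol{1}^{n}) $ depends on $ \lambda $ only through its length. Alternatively, one can argue directly without symmetric functions: a proper coloring with colors in $ \{1,\dots,n\} $ is equivalent to the data of a stable partition together with an \emph{injective} assignment of colors to its blocks, a stable partition into $ \ell $ blocks admits exactly $ (n)_{\ell} $ such injections, and summing over stable partitions grouped by block count reproduces the formula.
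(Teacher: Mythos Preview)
The paper does not actually supply a proof of this proposition; it is quoted as a known result of Read and then used, together with Proposition~\ref{Stanley CSF stable partition}, to justify Proposition~\ref{map m_tilde}. Your derivation is correct and is exactly the specialization that the paper's remark (``This proposition may be considered as a generalization of the following proposition'') is pointing to: the key identity $\tilde{m}_{\lambda}(\boldsymbol{1}^{n}) = (n)_{\ell(\lambda)}$ you compute is precisely what motivates the paper's subsequent definition of the linear map $\varepsilon_{\tilde{m}}$, so your argument in effect proves Proposition~\ref{Read coeff falling factorial} and Proposition~\ref{map m_tilde} in one stroke rather than treating the former as an independent input. The alternative bijective argument you sketch at the end (a proper $n$-coloring is a stable partition together with an injection of its blocks into $\{1,\dots,n\}$) is closer in spirit to Read's original combinatorial proof.
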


Define a map $ \varepsilon_{\tilde{m}} \colon \Sym_{\mathbb{Q}} \to \mathbb{Q}[t] $ by the linear extension of $ \varepsilon_{\tilde{m}}(\tilde{m}_{\lambda}) \coloneqq (t)_{\ell} $, where $ \ell $ is the length of $ \lambda $. 
\begin{proposition}\label{map m_tilde}
Given a simple graph $ G $, we have 
\begin{align*}
\varepsilon_{\tilde{m}}(X(G,\boldsymbol{x})) = \chi(G,t). 
\end{align*}
\end{proposition}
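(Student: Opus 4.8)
The plan is to combine Proposition \ref{Stanley CSF stable partition} with Proposition \ref{Read coeff falling factorial} and simply apply the linear map $\varepsilon_{\tilde m}$ term by term. This is a short computation once we observe that $\varepsilon_{\tilde m}$ was \emph{designed} precisely so that it sends $\tilde m_\lambda$ to $(t)_\ell$, where $\ell$ is the length of $\lambda$, and that the length of $\lambda$ is exactly the number of blocks of any stable partition of type $\lambda$.

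First I would expand $X(G,\boldsymbol x)$ using Proposition \ref{Stanley CSF stable partition}, writing $X(G,\boldsymbol x) = \sum_{\lambda} |\St_\lambda(G)| \tilde m_\lambda$, the sum over all integer partitions $\lambda$ of $|V_G|$ (all other terms vanish since $X(G,\boldsymbol x)$ is homogeneous of degree $|V_G|$, and $\St_\lambda(G)=\varnothing$ otherwise). Applying the linear map $\varepsilon_{\tilde m}$ and using its defining property $\varepsilon_{\tilde m}(\tilde m_\lambda) = (t)_{\ell(\lambda)}$, we get
\begin{align*}
\varepsilon_{\tilde m}(X(G,\boldsymbol x)) = \sum_{\lambda} |\St_\lambda(G)| (t)_{\ell(\lambda)}.
\end{align*}
Next I would regroup this sum according to the common value $\ell$ of the length: for a fixed $\ell$, the partitions $\lambda$ of $|V_G|$ with $\ell(\lambda)=\ell$ are exactly those that can occur as types of stable partitions with $\ell$ blocks, and $\sum_{\lambda : \ell(\lambda)=\ell} |\St_\lambda(G)| = |\St_\ell(G)|$ since every stable partition into $\ell$ blocks has a well-defined type of length $\ell$ and the sets $\St_\lambda(G)$ for distinct $\lambda$ are disjoint. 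Hence
\begin{align*}
\varepsilon_{\tilde m}(X(G,\boldsymbol x)) = \sum_{\ell=1}^{|V_G|} |\St_\ell(G)| (t)_\ell = \chi(G,t),
\end{align*}
where the last equality is Proposition \ref{Read coeff falling factorial}.

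There is essentially no obstacle here: the statement is a bookkeeping consequence of two already-established propositions, and the only point requiring a sentence of care is the partition-of-the-index-set argument in the regrouping step (that $\St_\ell(G) = \bigsqcup_{\ell(\lambda)=\ell} \St_\lambda(G)$). One could alternatively note that $\varepsilon_{\tilde m}$ and $\varepsilon_p$ agree on $X(G,\boldsymbol x)$ and cite the earlier proposition $\varepsilon_p(X(G,\boldsymbol x)) = \chi(G,t)$, but the direct route via stable partitions is cleaner and self-contained, so that is the one I would write out.
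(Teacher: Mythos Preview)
Your proof is correct and follows exactly the approach of the paper, which simply states that the result follows immediately from Propositions \ref{Stanley CSF stable partition} and \ref{Read coeff falling factorial}. You have merely spelled out the one-line regrouping argument (that $\St_\ell(G) = \bigsqcup_{\ell(\lambda)=\ell} \St_\lambda(G)$) that the paper leaves implicit.
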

\begin{proof}
This follows immediately by  Propositions \ref{Stanley CSF stable partition} and \ref{Read coeff falling factorial}. 
\end{proof}

Note that the maps $ \varepsilon_{p} $ and $ \varepsilon_{\tilde{m}} $ are different since $ \varepsilon_{\tilde{m}} $ is not a ring homomorphism from $ \Sym_{\mathbb{Q}} $ to $ \mathbb{Q}[t] $. 
However, if we restricts the domain to the set of chromatic symmetric functions, then $ \varepsilon_{p} $ and $ \varepsilon_{\tilde{m}} $ coincide. 

We will introduce multiplications on $ \Sym_{\mathbb{Q}} $ and $ \mathbb{Q}[t] $ such that the map $ \varepsilon_{\tilde{m}} $ becomes a ring homomorphism. 
For integer partitions $ \lambda $ and $ \mu $, let $ \lambda \uplus \mu $ denote the union as multisets. 
For example, $ (3,2,2,1) \uplus (4,2,1) = (4,3,2,2,2,1) $. 
Define a multiplication $ \odot $ on $ \Sym_{\mathbb{Q}} $ by the linear extension of $ \tilde{m}_{\lambda} \odot \tilde{m}_{\mu} \coloneqq \tilde{m}_{\lambda \uplus \mu} $. 
Let $ (\Sym_{\mathbb{Q}}, \odot) $ denote the $ \mathbb{Q} $-algebra equipped with the usual addition and the multiplication $ \odot $. 
Since $ \{\tilde{m}_{\lambda}\}_{\lambda} $ is a $ \mathbb{Q} $-basis for $ \Sym_{\mathbb{Q}} $, the algebra $ (\Sym_{\mathbb{Q}}, \odot) $ is a free commutative algebra generated by $ \{\tilde{m}_{k}\}_{k \in \mathbb{N}} $. 
Moreover, define a multiplication $ \odot $ on $ \mathbb{Q}[t] $ by the linear extension of $ (t)_{\ell} \odot (t)_{m} \coloneqq (t)_{\ell+m} $. 
Let $ (\mathbb{Q}[t],\odot) $ be the $ \mathbb{Q} $-algebra equipped with the usual addition and the multiplication $ \odot $. 
Then $ (\mathbb{Q}[t],\odot) $ is a free commutative algebra generated by $ (t)_{1} $. 
It is easy to verify that the map $ \varepsilon_{\tilde{m}} $ is a ring homomorphism from $ (\Sym_{\mathbb{Q}}, \odot) $ to $ (\mathbb{Q}[t], \odot) $. 

We will see that the chromatic symmetric function of the join $ G+H $ is a product of the chromatic symmetric functions of $ G $ and $ H $ with respect to the multiplication $ \odot $. 
The following proposition is required, which is an analogy of \cite[Proposition 3.11]{hasebe2017order-joac}. 
\begin{proposition}\label{stable partitions of join}
Let $ G $ and $ H $ be simple graphs. 
For every integer partition $ \lambda $, there exists a bijection 
\begin{align*}
\St_{\lambda}(G+H) \simeq \bigsqcup_{\mu \uplus \nu = \lambda} \left(\St_{\mu}(G) \times \St_{\nu}(H)\right). 
\end{align*}
\end{proposition}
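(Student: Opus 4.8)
The plan is to construct the bijection explicitly by exploiting the key structural fact about stable partitions in a join: in $ G+H $, every vertex of $ G $ is adjacent to every vertex of $ H $, so any stable set of $ G+H $ is contained entirely in $ V_G $ or entirely in $ V_H $. First I would take a stable partition $ \pi = \{B_1, \dots, B_\ell\} \in \St_\lambda(G+H) $ and observe that each block $ B_i $, inducing an edgeless subgraph, cannot meet both $ V_G $ and $ V_H $; hence $ B_i \subseteq V_G $ or $ B_i \subseteq V_H $. This lets me split $ \pi $ canonically as $ \pi = \pi_G \sqcup \pi_H $, where $ \pi_G $ collects the blocks contained in $ V_G $ and $ \pi_H $ those contained in $ V_H $. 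Since $ V_{G+H} = V_G \sqcup V_H $, the blocks in $ \pi_G $ partition $ V_G $ and those in $ \pi_H $ partition $ V_H $, and each is stable in the respective graph because $ G $ and $ H $ are induced subgraphs of $ G+H $. Setting $ \mu \coloneqq \type(\pi_G) $ and $ \nu \coloneqq \type(\pi_H) $, we get $ \mu \uplus \nu = \type(\pi) = \lambda $, so the assignment $ \pi \mapsto (\pi_G, \pi_H) $ lands in $ \bigsqcup_{\mu \uplus \nu = \lambda}\left(\St_\mu(G) \times \St_\nu(H)\right) $.

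Next I would exhibit the inverse. Given $ \mu \uplus \nu = \lambda $ and a pair $ (\sigma, \tau) \in \St_\mu(G) \times \St_\nu(H) $, form $ \pi \coloneqq \sigma \sqcup \tau $, the collection of all blocks of $ \sigma $ together with all blocks of $ \tau $. Because $ V_G $ and $ V_H $ are disjoint, $ \pi $ is a genuine set partition of $ V_G \sqcup V_H = V_{G+H} $ of type $ \mu \uplus \nu = \lambda $. Each block of $ \pi $ is either a block of $ \sigma $ (hence edgeless in $ G $, and no new edges inside $ V_G $ are added in passing to $ G+H $) or a block of $ \tau $ (similarly edgeless in $ H $); since the only extra edges in $ G+H $ run between $ V_G $ and $ V_H $, and no block spans both sides, every block stays edgeless in $ G+H $. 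Thus $ \pi \in \St_\lambda(G+H) $. It is then routine to check that the two maps are mutually inverse: starting from $ \pi $, splitting off $ \pi_G $ and $ \pi_H $ and re-merging recovers $ \pi $, and conversely merging $ \sigma, \tau $ and then splitting by membership in $ V_G $ versus $ V_H $ recovers $ (\sigma, \tau) $, with the decomposition index $ (\mu, \nu) $ preserved throughout.

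I do not expect a serious obstacle here; the content is the single observation that stability forces each block to one side of the join, and everything else is bookkeeping on disjoint unions of vertex sets. The one point that deserves a careful sentence is that the map is well defined on the \emph{disjoint} union over all $ (\mu,\nu) $ with $ \mu \uplus \nu = \lambda $: a given $ \pi $ determines $ \pi_G $ and $ \pi_H $ uniquely, hence determines the pair $ (\mu,\nu) $ uniquely, so there is no ambiguity about which summand $ \pi $ is sent to. If one wanted to be slightly more formal about "type", one would note that $ \type $ is additive under disjoint union of set partitions, i.e. $ \type(\sigma \sqcup \tau) = \type(\sigma) \uplus \type(\tau) $, which is immediate from the definition as a multiset of block sizes. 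This proposition is the combinatorial engine that will let us conclude, via Proposition \ref{Stanley CSF stable partition}, that $ X(G+H,\boldsymbol{x}) = X(G,\boldsymbol{x}) \odot X(H,\boldsymbol{x}) $.
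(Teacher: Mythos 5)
Your proposal is correct and follows exactly the paper's argument: stability forces each block of a partition of $G+H$ into one side of the join, and the map $\pi \mapsto (\pi_G,\pi_H)$ is the bijection. Your write-up simply supplies more detail (the explicit inverse and the additivity of $\type$) than the paper's two-line version.
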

\begin{proof}
Every block of a stable partition $ \pi \in \St_{\lambda}(G+H) $ consists of either vertices in $ G $ or vertices in $ H $ since each vertex of $ G $ is adjacent to the vertices of $ H $. 
Let $ \pi_{G}, \pi_{H} $ denote the collection of blocks consisting of vertices in $ G, H $, respectively. 
Then we have that $ \pi = \pi_{G} \sqcup \pi_{H} $. 
Hence the mapping $ \pi \mapsto (\pi_{G},\pi_{H}) $ is a desired bijection. 
\end{proof}
The following proposition is an analogy of \cite[Proposition 3.12]{hasebe2017order-joac}. 
\begin{lemma}\label{CSF join}
Let $ G $ and $ H $ be simple graphs. 
Then 
\begin{align*}
X(G+H,\boldsymbol{x}) = X(G,\boldsymbol{x}) \odot X(H,\boldsymbol{x}). 
\end{align*}
\end{lemma}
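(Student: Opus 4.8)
The plan is to expand both sides in the augmented monomial basis $\{\tilde{m}_{\lambda}\}_{\lambda}$ and compare coefficients. By Proposition \ref{Stanley CSF stable partition}, $X(G,\boldsymbol{x}) = \sum_{\mu} |\St_{\mu}(G)|\tilde{m}_{\mu}$ and $X(H,\boldsymbol{x}) = \sum_{\nu} |\St_{\nu}(H)|\tilde{m}_{\nu}$, where $\mu$ and $\nu$ range over integer partitions. Since each chromatic symmetric function is homogeneous of finite degree, these are finite sums, so no convergence issues arise when multiplying.

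Next I would apply the definition of $\odot$. Because $\odot$ is the bilinear extension of $\tilde{m}_{\mu} \odot \tilde{m}_{\nu} = \tilde{m}_{\mu \uplus \nu}$, we get
\begin{align*}
X(G,\boldsymbol{x}) \odot X(H,\boldsymbol{x}) = \sum_{\mu,\nu} |\St_{\mu}(G)|\,|\St_{\nu}(H)|\,\tilde{m}_{\mu \uplus \nu} = \sum_{\lambda} \left( \sum_{\mu \uplus \nu = \lambda} |\St_{\mu}(G)|\,|\St_{\nu}(H)| \right) \tilde{m}_{\lambda}.
\end{align*}
So it remains to identify the inner sum with $|\St_{\lambda}(G+H)|$.

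This is exactly what Proposition \ref{stable partitions of join} provides: the bijection $\St_{\lambda}(G+H) \simeq \bigsqcup_{\mu \uplus \nu = \lambda} \bigl(\St_{\mu}(G) \times \St_{\nu}(H)\bigr)$ gives $|\St_{\lambda}(G+H)| = \sum_{\mu \uplus \nu = \lambda} |\St_{\mu}(G)|\,|\St_{\nu}(H)|$. Substituting this in and invoking Proposition \ref{Stanley CSF stable partition} once more for $G+H$ yields $X(G,\boldsymbol{x}) \odot X(H,\boldsymbol{x}) = \sum_{\lambda} |\St_{\lambda}(G+H)|\,\tilde{m}_{\lambda} = X(G+H,\boldsymbol{x})$, which is the claim.

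I do not anticipate a genuine obstacle here; all the real content has already been isolated into Proposition \ref{stable partitions of join} (the combinatorial fact that every stable block of $G+H$ lies entirely in $G$ or entirely in $H$) and into the definition of $\odot$ (engineered precisely so that $\uplus$ on partition types becomes multiplication). The only point deserving a word of care is that the rearrangement of the double sum into a sum over $\lambda$ is legitimate, which follows from finiteness of the supports in each degree. If anything, the "hard part" is purely expository: making sure the reader sees that $\odot$ was defined exactly to make this identity hold, mirroring how ordinary multiplication of $\Sym_{\mathbb{Q}}$ encodes disjoint union in Proposition \ref{Stanley CSF disjoint}.
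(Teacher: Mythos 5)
Your proposal is correct and follows essentially the same route as the paper: expand both sides in the $\tilde{m}_{\lambda}$ basis via Proposition \ref{Stanley CSF stable partition}, use the bijection of Proposition \ref{stable partitions of join} to identify the coefficients, and regroup the double sum. The extra remarks on finiteness of the sums in each degree are fine but not needed beyond what the paper already does implicitly.
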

\begin{proof}
By Propositions \ref{Stanley CSF stable partition} and \ref{stable partitions of join}, we have 
\begin{align*}
X(G+H,\boldsymbol{x}) &= \sum_{\lambda} |\St_{\lambda}(G+H)| \tilde{m}_{\lambda} \\
&= \sum_{\lambda} \sum_{\mu \uplus \nu = \lambda}|\St_{\mu}(G)||\St_{\nu}(H)|\tilde{m}_{\mu \uplus \nu} \\
&= \sum_{\mu,\nu} |\St_{\mu}(G)||\St_{\nu}(H)|\tilde{m}_{\mu}  \odot \tilde{m}_{\nu} \\
&= \left(\sum_{\mu}|\St_{\mu}(G)|\tilde{m}_{\mu}\right)\odot\left(\sum_{\nu}|\St_{\nu}(H)|\tilde{m}_{\nu}\right) \\
&= X(G,\boldsymbol{x})\odot X(H,\boldsymbol{x}). 
\end{align*}
\end{proof}

Using Proposition \ref{map m_tilde} and Lemma \ref{CSF join}, we can recover the following result of Read. 
\begin{proposition}[{\cite[Theorem 4]{read1968introduction-joct}}]
Let $ G,H $ be simple graphs. 
Then
\begin{align*}
\chi(G + H, t) = \chi(G,t) \odot \chi(H,t). 
\end{align*}
\end{proposition}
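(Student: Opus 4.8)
The plan is to derive the identity $\chi(G+H,t)=\chi(G,t)\odot\chi(H,t)$ by transporting Lemma \ref{CSF join} across the map $\varepsilon_{\tilde m}$, using that this map is a ring homomorphism from $(\Sym_{\mathbb Q},\odot)$ to $(\mathbb Q[t],\odot)$ together with Proposition \ref{map m_tilde}. So the first step is simply to apply $\varepsilon_{\tilde m}$ to both sides of the equation $X(G+H,\boldsymbol{x})=X(G,\boldsymbol{x})\odot X(H,\boldsymbol{x})$ from Lemma \ref{CSF join}.

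On the left-hand side, Proposition \ref{map m_tilde} gives $\varepsilon_{\tilde m}(X(G+H,\boldsymbol{x}))=\chi(G+H,t)$ directly. On the right-hand side, since $\varepsilon_{\tilde m}\colon(\Sym_{\mathbb Q},\odot)\to(\mathbb Q[t],\odot)$ is a ring homomorphism (as observed in the paragraph preceding Lemma \ref{CSF join}), I can write $\varepsilon_{\tilde m}(X(G,\boldsymbol{x})\odot X(H,\boldsymbol{x}))=\varepsilon_{\tilde m}(X(G,\boldsymbol{x}))\odot\varepsilon_{\tilde m}(X(H,\boldsymbol{x}))$, and then apply Proposition \ref{map m_tilde} to each factor to obtain $\chi(G,t)\odot\chi(H,t)$. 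Combining the two computations yields the claimed formula.

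I do not anticipate any real obstacle here; the entire content has been front-loaded into the setup of the $\odot$-product on $\mathbb Q[t]$ and the homomorphism property of $\varepsilon_{\tilde m}$, which the excerpt already establishes. The only point worth a sentence of care is that $\varepsilon_{\tilde m}$ is \emph{not} a homomorphism for the ordinary product on $\Sym_{\mathbb Q}$ — but we are using it only against the $\odot$-product, for which it is, so the argument is legitimate. One could alternatively give a direct combinatorial proof via Proposition \ref{Read coeff falling factorial} and Proposition \ref{stable partitions of join}, mirroring the proof of Lemma \ref{CSF join}, but the homomorphism route is shorter and is evidently the intended one given how the preliminaries were arranged.

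In LaTeX the proof would read roughly: ``By Lemma \ref{CSF join} and the fact that $\varepsilon_{\tilde m}$ is a ring homomorphism from $(\Sym_{\mathbb Q},\odot)$ to $(\mathbb Q[t],\odot)$, we have
\begin{align*}
\chi(G+H,t) &= \varepsilon_{\tilde m}(X(G+H,\boldsymbol{x})) \\
&= \varepsilon_{\tilde m}(X(G,\boldsymbol{x}) \odot X(H,\boldsymbol{x})) \\
&= \varepsilon_{\tilde m}(X(G,\boldsymbol{x})) \odot \varepsilon_{\tilde m}(X(H,\boldsymbol{x})) \\
&= \chi(G,t) \odot \chi(H,t),
\end{align*}
where the first and last equalities use Proposition \ref{map m_tilde}.''
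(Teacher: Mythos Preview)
Your proof is correct and matches the paper's approach exactly: the paper simply notes that the result follows ``using Proposition \ref{map m_tilde} and Lemma \ref{CSF join},'' which is precisely the transport-along-$\varepsilon_{\tilde m}$ argument you wrote out.
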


\begin{remark}
There is no unary operation on $ \Sym_{\mathbb{Q}} $ which is compatible with taking the complement. 
Stanley's example shows that the graphs $ G $ and $ H $ in Figure \ref{Fig:stanley's example} have the same chromatic symmetric function: 
\begin{align*}
X(G,\boldsymbol{x}) = X(H,\boldsymbol{x}) = \tilde{m}_{11111}+4\tilde{m}_{2111}+2\tilde{m}_{221}. 
\end{align*} 
However, the chromatic symmetric functions of their complements are distinct: 
\begin{align*}
X(\overline{G},\boldsymbol{x}) &= \tilde{m}_{11111}+6\tilde{m}_{2111}+5\tilde{m}_{221}+2\tilde{m}_{311}+2\tilde{m}_{32}, \\
X(\overline{H},\boldsymbol{x}) &= \tilde{m}_{11111}+6\tilde{m}_{2111}+5\tilde{m}_{221}+2\tilde{m}_{311}+\tilde{m}_{32}. 
\end{align*}
\end{remark}
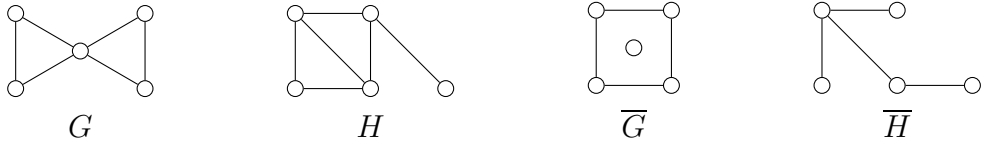
\begin{figure}[t]
\centering
\begin{tikzpicture}
\draw (-0.86, 0.5) node[v](1){};
\draw (-0.86,-0.5) node[v](2){};
\draw (    0,   0) node[v](3){};
\draw ( 0.86, 0.5) node[v](4){};
\draw ( 0.86,-0.5) node[v](5){};
\draw (3)--(1)--(2)--(3)--(4)--(5)--(3);
\draw (0,-1) node(){$ G $};
\end{tikzpicture}
\hspace{15mm}
\begin{tikzpicture}
\draw (0,1) node[v](1){};
\draw (0,0) node[v](2){};
\draw (1,0) node[v](3){};
\draw (1,1) node[v](4){};
\draw (2,0) node[v](5){};
\draw (1)--(2)--(3)--(4)--(1)--(3);
\draw (4)--(5);
\draw (1,-0.5) node(){$ H $};
\end{tikzpicture}
\hspace{15mm}
\begin{tikzpicture}
\draw (0,1) node[v](1){};
\draw (0,0) node[v](2){};
\draw (1,0) node[v](3){};
\draw (1,1) node[v](4){};
\draw (0.5,0.5) node[v](5){};
\draw (1)--(2)--(3)--(4)--(1);
\draw (0.5,-0.5) node(){$ \overline{G} $};
\end{tikzpicture}
\hspace{15mm}
\begin{tikzpicture}
\draw (0,1) node[v](1){};
\draw (0,0) node[v](2){};
\draw (1,0) node[v](3){};
\draw (1,1) node[v](4){};
\draw (2,0) node[v](5){};
\draw (1)--(2);
\draw (1)--(3)--(5);
\draw (1)--(4);
\draw (1,-0.5) node(){$ \overline{H} $};
\end{tikzpicture}
\caption{Stanley's examples and their complements}\label{Fig:stanley's example}
\end{figure}

\section{Discrimination}\label{Sec:discrimination}
\subsection{Discrimination for trivially perfect graphs}
We now ready to prove Theorem \ref{main theorem 1}. 
The following proof is almost as same as the proof of \cite[Theorem 1.3]{hasebe2017order-joac}. 
\begin{proof}[Proof of Theorem \ref{main theorem 1}]
We proceed by induction on $ |V_{G}| $. 
When $ |V_{G}|=1 $, we have $ G=H=K_{1} $. 
Suppose that $ |V_{G}| \geq 2 $. 
Decompose $ G $ and $ H $ into their connected components:
\begin{align*}
G=\bigsqcup_{i=1}^{n}G_{i}, \qquad H=\bigsqcup_{i=1}^{m}H_{i}. 
\end{align*}
By the assumption $ X(G,\boldsymbol{x}) = X(H,\boldsymbol{x}) $ and Proposition \ref{Stanley CSF disjoint}, we have 
\begin{align*}
\prod_{i=1}^{n}X(G_{i},\boldsymbol{x}) =  \prod_{i=1}^{m}X(H_{i},\boldsymbol{x}).
\end{align*}
The ring of symmetric functions $ \Sym_{\mathbb{Q}} $ is a free commutative algebra and hence it is a unique factorization domain. 
Using Corollary \ref{CvW cor}, we have that $ n=m $ and $ X(G_{i},\boldsymbol{x}) = X(H_{i}, \boldsymbol{x}) $ for each $ i $ after a suitable renumbering. 

Assume that $ n \geq 2 $. 
The induced subgraphs $ G_{i},H_{i} $ are also trivially perfect by Theorem \ref{FISC}(\ref{FISC2}) and the number of vertices of $ G_{i} $ is less than $ |V_{G}| $. 
Therefore, by our induction hypothesis, we have that $ G_{i} $ is isomorphic to $ H_{i} $. 
Hence $ G $ and $ H $ are isomorphic. 

Now consider the case $ n=1 $, that is, $ G $ and $ H $ are connected. 
By the definition of trivially perfect graphs, there are trivially perfect graphs $ G^{\prime}, H^{\prime} $ such that $ G=G^{\prime}+K_{1} $ and $ H=H^{\prime}+K_{1} $. 
Since $ X(K_{1},\boldsymbol{x}) = \tilde{m}_{1} $, using Lemma \ref{CSF join}, we have 
\begin{align*}
X(G^{\prime}, \boldsymbol{x}) \odot \tilde{m}_{1} = X(H^{\prime}, \boldsymbol{x}) \odot \tilde{m}_{1}. 
\end{align*}
Since the algebra $ (\Sym_{\mathbb{Q}},\odot) $ is an integral domain, we have $ X(G^{\prime},\boldsymbol{x})=X(H^{\prime},\boldsymbol{x}) $. 
Our induction hypothesis forces that $ G^{\prime} $ is isomorphic to $ H^{\prime} $. 
Thus $ G $ and $ H $ are isomorphic. 
\end{proof}

\subsection{Discrimination for cographs}\label{Subsec:CSF cannot distinguish cographs}
As mentioned in Section \ref{Sec:introduction}, the chromatic symmetric function cannot distinguish cographs. 
We will raise an example. 

A simple graph is called \textbf{coconnected} if its complement is connected. 
Consider a simple graph $ G $ and a decomposition $ \overline{G} = \overline{G}_{1} \sqcup \dots \sqcup \overline{G}_{n} $, where $ \overline{G}_{i} $ is a connected component of $ \overline{G} $. 
Taking complements of the both sides, we obtain $ G = G_{1} + \dots + G_{n} $. 
Every $ G_{i} $ is called a \textbf{coconnected component}. 
Since the connected components of a simple graph are uniquely determined, hence coconnected components are also uniquely determined. 

The isomorphic classes of cographs is closed under taking the disjoint union $ \sqcup $ and taking the join $ + $. 
Let $ \Cograph $ denote the algebraic system equipped with two commutative and associative operations $ \sqcup $ and $ + $ whose underlying set consists of the isomorphic classes of cographs. 
\begin{proposition}\label{cograph free}
The algebraic system $ \Cograph $ is free and generated by $ K_{1} $. 
\end{proposition}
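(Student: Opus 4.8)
The plan is to show that $\Cograph$ has the universal property of a free algebraic system on the single generator $K_1$ over the signature consisting of two binary operations (both assumed commutative and associative). Concretely, the underlying set is the set of isomorphism classes of cographs, and by the defining rules (\ref{rule 1},\ref{rule 4},\ref{rule 5}) — note that rule (\ref{rule 6}) may be replaced by (\ref{rule 5}) as remarked in the excerpt — every cograph is built from copies of $K_1$ by iterated $\sqcup$ and $+$. So $\Cograph$ is generated by $K_1$; what remains is freeness, i.e.\ that there are \emph{no} relations among these operations beyond commutativity and associativity of each. Equivalently, I must show that two cograph-expressions in the generator $K_1$ that are equal in $\Cograph$ are already equal in the free commutative-associative algebra on one generator with two such operations.

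First I would set up the normal form. A nonidentity element of the free algebraic system on $\{K_1\}$ with two commutative associative binary operations is exactly a finite unordered rooted tree whose internal nodes are alternately labelled $\sqcup$ and $+$ (no internal node has the same label as its parent, since associativity collapses such chains) and whose leaves are the generator; this is precisely the \textbf{cotree} (modraulic parse tree) of a cograph. The classical structure theorem for cographs (equivalently, the uniqueness of the decomposition into connected/coconnected components) says: every cograph $G$ with at least two vertices is \emph{either} disconnected, and then uniquely $G = G_1 \sqcup \dots \sqcup G_n$ with each $G_i$ connected ($n \ge 2$), \emph{or} $G$ is connected, and then $\overline G$ is disconnected, so by taking complements $G = G_1 + \dots + G_n$ uniquely with each $G_i$ coconnected ($n \ge 2$) — and a connected cograph on $\ge 2$ vertices is never coconnected, so these two cases are mutually exclusive. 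This dichotomy, applied recursively, assigns to each cograph a canonical cotree, and conversely the free algebra's normal form is read off a cotree. So the two sets of normal forms are in natural bijection.

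Next I would verify that this bijection is a homomorphism of the algebraic systems, which gives the isomorphism $\Cograph \cong (\text{free algebra on } K_1)$. The content is: applying $\sqcup$ to two cographs corresponds, on cotrees, to either joining their roots under a common $\sqcup$-node (when at least one has a $\sqcup$-root or is a single leaf) or merging their $\sqcup$-children — and this matches exactly how the $\sqcup$ operation acts on normal forms in the free algebra, because there the only simplification is flattening nested $\sqcup$'s via associativity. The symmetric statement holds for $+$. Uniqueness of connected and coconnected component decompositions (stated in the excerpt) is precisely what guarantees the cotree — hence the normal form — is well defined and unaffected by the order in which one decomposes, i.e.\ that distinct normal-form trees give nonisomorphic cographs.

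The main obstacle — really the only nontrivial point — is establishing that a cograph on $\ge 2$ vertices cannot be both connected and coconnected, and more generally that the connected/coconnected split is a genuine dichotomy at each level so the cotree alternates strictly; this is the step where $P_4$-freeness does real work and is where I would either invoke the standard cograph structure theorem or give the short direct argument: if $G$ and $\overline G$ were both connected with $|V_G| \ge 2$, a minimal induced connected subgraph argument (or the explicit classification) produces an induced $P_4$. Everything else is bookkeeping: checking that the recursive decomposition terminates (the number of vertices strictly decreases), that it is order-independent (by uniqueness of component decompositions), and that the resulting bijection respects $\sqcup$ and $+$. Once the dichotomy and these routine checks are in hand, freeness follows because any relation holding in $\Cograph$ would force two non-equal normal-form cotrees to be isomorphic as cographs, contradicting the bijection.
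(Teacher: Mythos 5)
Your proof is correct and follows essentially the same route as the paper's: a recursive appeal to the uniqueness of the connected/coconnected component decomposition of a cograph, which you package as the cotree normal form (the paper's own remark notes that the unique-cotree statement is equivalent to the proposition). One small point: since the paper defines cographs generatively by rules (1), (4), (5), the fact that a connected cograph on at least two vertices is a nontrivial join is immediate from that definition, so the step you single out as the main obstacle requires no appeal to $P_{4}$-freeness here.
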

\begin{proof}
Let $ G $ be a cograph. 
We proceed by induction on $ |V_{G}| $. 
If $ |V_{G}|=1 $, then $ G=K_{1} $ and there are no other representations. 
Assume that $ |V_{G}| \geq 2 $. 
By the definition of cographs, $ G $ is either a disjoint union or a join of some cographs. 
By the induction hypothesis, the connected components or the coconnected components of $ G $ are represented uniquely by using $ K_{1} $. 
Therefore $ G $ also has a unique representation by using $ K_{1} $. 
Thus $ \Cograph $ is a free algebraic system. 
\end{proof}
\begin{remark}
One can construct an algebraic system called a commutative De Morgan bisemigroup from $ \Cograph $. 
A generalized result of Proposition \ref{cograph free} was proven by \cite{esik2003free-ac}. 
In \cite{corneil1981complement-dam}, it was shown that every cograph admits a unique cotree representation, which is equivalent to Proposition \ref{cograph free}. 
\end{remark}

For the proof of Theorem \ref{main theorem 1}, it plays an important role that a simple graph is connected if and only if its chromatic symmetric function is irreducible in $ \Sym_{\mathbb{Q}} $ (Corollary \ref{CvW cor}). 
However, there is no reason why the chromatic symmetric function of a coconnected cograph is irreducible in $ (\Sym_{\mathbb{Q}}, \odot) $. 
In fact, we have the following equalities by using Proposition \ref{Stanley CSF stable partition}. 
\begin{align*}
X(K_{2} \sqcup K_{1}, \boldsymbol{x}) &= \tilde{m}_{111}+2\tilde{m}_{21} = \tilde{m}_{1} \odot (\tilde{m}_{11}+2\tilde{m}_{2}), \\
X(K_{6}\sqcup K_{1}, \boldsymbol{x}) &= \tilde{m}_{1111111}+6\tilde{m}_{211111} = \tilde{m}_{11111}\odot(\tilde{m}_{11}+6\tilde{m}_{2}), \\
X(K_{4}\sqcup K_{2},\boldsymbol{x}) &= \tilde{m}_{111111}+8\tilde{m}_{21111}+12\tilde{m}_{2211} = \tilde{m}_{11}\odot(\tilde{m}_{11}+2\tilde{m}_{2})\odot(\tilde{m}_{11}+6\tilde{m}_{2}), \\
X(K_{4},\boldsymbol{x}) &= \tilde{m}_{1111}. 
\end{align*}
By Lemma \ref{CSF join}, these equalities yield that both of the cographs $ (K_{2}\sqcup K_{1})+(K_{6}\sqcup K_{1}) $ and $ (K_{4}\sqcup K_{2})+K_{4} $ have the same chromatic symmetric function 
\begin{align*}
\tilde{m}_{111111}\odot(\tilde{m}_{11}+2\tilde{m}_{2})\odot(\tilde{m}_{1}+6\tilde{m}_{2}). 
\end{align*}
Furthermore, by Proposition \ref{cograph free}, we have that these graphs are not isomorphic (Figure \ref{Fig:example cographs}). 
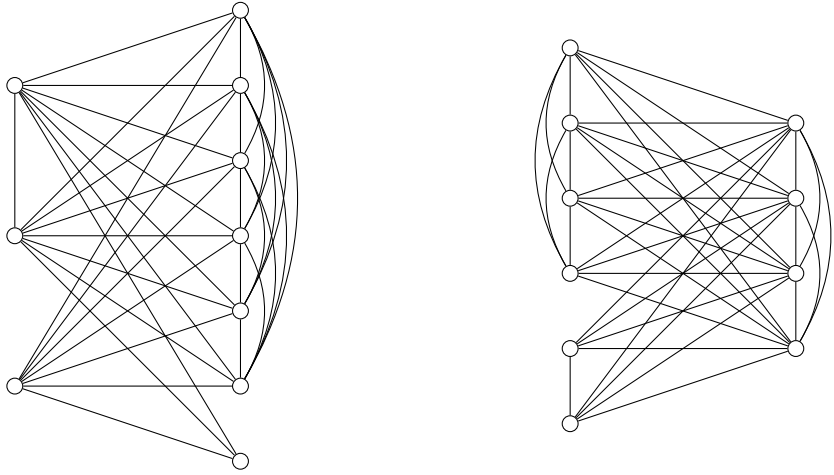
\begin{figure}[t]
\centering
\begin{tikzpicture}[baseline=0]
\draw (0,0) node[v](l3){};
\draw (0,2) node[v](l2){};
\draw (0,4) node[v](l1){};
\draw (l1)--(l2);
\draw (3,-1) node[v](r7){};
\draw (3, 0) node[v](r6){};
\draw (3, 1) node[v](r5){};
\draw (3, 2) node[v](r4){};
\draw (3, 3) node[v](r3){};
\draw (3, 4) node[v](r2){};
\draw (3, 5) node[v](r1){};
\draw (r1)--(r2)--(r3)--(r4)--(r5)--(r6);
\draw[bend left] (r1) to (r3);
\draw[bend left] (r1) to (r4);
\draw[bend left] (r1) to (r5);
\draw[bend left] (r1) to (r6);
\draw[bend left] (r2) to (r4);
\draw[bend left] (r2) to (r5);
\draw[bend left] (r2) to (r6);
\draw[bend left] (r3) to (r5);
\draw[bend left] (r3) to (r6);
\draw[bend left] (r4) to (r6);
\draw (l1)--(r1);
\draw (l1)--(r2);
\draw (l1)--(r3);
\draw (l1)--(r4);
\draw (l1)--(r5);
\draw (l1)--(r6);
\draw (l1)--(r7);
\draw (l2)--(r1);
\draw (l2)--(r2);
\draw (l2)--(r3);
\draw (l2)--(r4);
\draw (l2)--(r5);
\draw (l2)--(r6);
\draw (l2)--(r7);
\draw (l3)--(r1);
\draw (l3)--(r2);
\draw (l3)--(r3);
\draw (l3)--(r4);
\draw (l3)--(r5);
\draw (l3)--(r6);
\draw (l3)--(r7);
\end{tikzpicture}
\hspace{25mm}
\begin{tikzpicture}[baseline=-5mm]
\draw (0,4) node[v](l1){};
\draw (0,3) node[v](l2){};
\draw (0,2) node[v](l3){};
\draw (0,1) node[v](l4){};
\draw (0,0) node[v](l5){};
\draw (0,-1) node[v](l6){};
\draw (l1)--(l2)--(l3)--(l4);
\draw[bend right] (l1) to (l3);
\draw[bend right] (l1) to (l4);
\draw[bend right] (l2) to (l4);
\draw (l5)--(l6);
\draw (3,3) node[v](r1){};
\draw (3,2) node[v](r2){};
\draw (3,1) node[v](r3){};
\draw (3,0) node[v](r4){};
\draw (r1)--(r2)--(r3)--(r4);
\draw[bend left] (r1) to (r3);
\draw[bend left] (r1) to (r4);
\draw[bend left] (r2) to (r4);
\draw (l1)--(r1);
\draw (l1)--(r2);
\draw (l1)--(r3);
\draw (l1)--(r4);
\draw (l2)--(r1);
\draw (l2)--(r2);
\draw (l2)--(r3);
\draw (l2)--(r4);
\draw (l3)--(r1);
\draw (l3)--(r2);
\draw (l3)--(r3);
\draw (l3)--(r4);
\draw (l4)--(r1);
\draw (l4)--(r2);
\draw (l4)--(r3);
\draw (l4)--(r4);
\draw (l5)--(r1);
\draw (l5)--(r2);
\draw (l5)--(r3);
\draw (l5)--(r4);
\draw (l6)--(r1);
\draw (l6)--(r2);
\draw (l6)--(r3);
\draw (l6)--(r4);
\end{tikzpicture}
\caption{The smallest example of two non-isomorphic cographs which have the same chromatic symmetric function}\label{Fig:example cographs}
\end{figure}

\section{$ e $-positivity of claw-free cographs}\label{Sec:e-positivity}
In this section, we will prove Theorem \ref{main theorem 2} and conclude that every claw-free cograph is $ e $-positive. 

\begin{lemma}\label{coconnected components}
Every coconnected component of a connected claw-free cograph is $ K_{1} $ or a disjoint union of two complete graphs. 
\end{lemma}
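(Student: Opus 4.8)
The plan is to analyze the structure of a connected claw-free cograph via its decomposition into coconnected components. Since $G$ is a connected cograph with $|V_G| \geq 2$, it is coconnected-decomposable: we may write $G = G_1 + \dots + G_n$ where the $G_i$ are the coconnected components, each of which is itself a cograph (being an induced subgraph of $G$) and must be \emph{disconnected} (otherwise the complement of $G_i$ would be disconnected, contradicting that $G_i$ is a coconnected component — equivalently, connectedness of $\overline{G_i}$ is exactly the definition). So each $G_i$ is a disconnected cograph, hence $G_i = H_{i,1} \sqcup \dots \sqcup H_{i,k_i}$ with $k_i \geq 2$ and each $H_{i,j}$ a nonempty cograph. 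The claim to prove is that each such $G_i$ is either $K_1$ or a disjoint union of \emph{exactly two complete graphs}.

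First I would handle the easy structural reductions. Since $G$ is claw-free, every induced subgraph of $G$ is claw-free; in particular each coconnected component $G_i$ is claw-free. So it suffices to show: a disconnected claw-free cograph with at least one edge is a disjoint union of two complete graphs. (If it has no edge and is disconnected with $\geq 2$ vertices, then it contains $3K_1$... wait, that is not a claw — I must be careful. Actually an edgeless graph on $\geq 2$ vertices is fine and claw-free, but then as a coconnected component of a larger graph $G = G_1 + \dots$ it would be $\overline{K_m}$; however $\overline{K_m}$ for $m \geq 2$ is already a join-indecomposable... no, $\overline{K_2} = K_1 \sqcup K_1$ is a disjoint union of two complete graphs, and $\overline{K_m}$ for $m \geq 3$ is a disjoint union of $\geq 3$ complete graphs, which the lemma would forbid. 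So I also need to rule that out.) The cleaner route: show that a disconnected claw-free graph has \emph{at most two} connected components, and that each connected component of a disconnected claw-free cograph is a complete graph.

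The two key claims are then: (a) a disconnected claw-free graph has at most two components — because if it had three components, picking one vertex from each of three distinct components together with... no, that gives $3K_1$ which is not a claw. The right argument: a disconnected claw-free graph has at most two components only if each component has... hmm. Let me reconsider: $K_1 \sqcup K_1 \sqcup K_1 = 3K_1$ is claw-free. So (a) is false as stated. The correct statement must use the cograph hypothesis differently. A connected cograph on $\geq 2$ vertices is a join $G = A + B$; for $G$ claw-free, if both $A$ and $B$ contain an edge, or if one contains two nonadjacent vertices while the other contains a vertex... Actually the real obstruction is inside the \emph{complement}: $G_i$ is a coconnected component means $\overline{G_i}$ is connected; $\overline{G_i}$ is a cograph, hence $\overline{G_i} = \overline{H_{i,1}} + \dots + \overline{H_{i,k_i}}$. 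The claw $K_{1,3}$ has complement $K_3 \sqcup K_1$... I think the efficient approach is: since $G = G_1 + \dots + G_n$ and $G$ is claw-free, analyze when a join contains an induced $K_{1,3}$: a claw has an independent set of size $3$, so at most two of the $G_i$ contain an edge... no. Let me state the real plan: prove directly that if $A \sqcup B$ is claw-free (disjoint union, $A, B$ nonempty) then either one of $A, B$ is a single vertex, or... The center of a claw together with its three leaves: three leaves are pairwise nonadjacent. In $A \sqcup B$, an independent set of size $3$ plus a common neighbor — the common neighbor lies in $A$ or $B$, say $A$, but then it is nonadjacent to everything in $B$, so all three leaves lie in $A$; thus a claw in $A \sqcup B$ lies entirely in $A$ or entirely in $B$. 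So $A \sqcup B$ claw-free iff $A$ and $B$ are both claw-free — this gives no bound on the number of components. I conclude the lemma genuinely needs the full cograph recursion, and the main obstacle is correctly organizing the induction; the intended proof surely inducts on $|V_G|$ using $G = G' + K_1$ or the coconnected decomposition, and the delicate point is showing a coconnected component that is a disjoint union of $\geq 3$ pieces, or one piece that is not complete, forces an induced $K_{1,3}$ or $P_4$ somewhere in $G$.

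\begin{proof}[Proof sketch]
We induct on $|V_G|$. The base case $|V_G| = 1$ is trivial. Let $G$ be a connected claw-free cograph with $|V_G| \geq 2$, and let $G = G_1 + \dots + G_n$ be its coconnected decomposition, so each $G_i$ is a claw-free cograph (as an induced subgraph of $G$) and $\overline{G_i}$ is connected. Fix a coconnected component $C = G_i$; we must show $C$ is $K_1$ or a disjoint union of two complete graphs. Since $\overline C$ is connected and $C$ is a cograph on $\geq 1$ vertices, either $|V_C| = 1$ (done) or $C$ is a \emph{disjoint} union $C = C_1 \sqcup \dots \sqcup C_k$ of $k \geq 2$ nonempty cographs. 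Suppose first $k \geq 3$. Choose a vertex from three distinct parts, say $a \in C_1$, $b \in C_2$, $c \in C_3$. If $k \geq 3$ and some $C_j$ has $\geq 2$ vertices, or more simply: pick any $u \in V(G_j)$ for another coconnected component $G_j$ with $j \neq i$ — since $G$ is connected and $n \geq 2$ when $C$ is join-reducible we would need care. Rather, argue inside $C$ together with the join structure: take $d$ a vertex of another part. The four vertices $a,b,c$ together with any vertex $w$ adjacent to all three — none exists inside $C$, but if $n \geq 2$ a vertex of another coconnected component is adjacent to $a,b,c$, yielding an induced $K_{1,3}$, a contradiction; hence $n = 1$, i.e. $C = G$. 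But then $G = C_1 \sqcup \dots \sqcup C_k$ is disconnected, contradicting that $G$ is connected. So $k \leq 2$. Thus $C = C_1 \sqcup C_2$ with $C_1, C_2$ nonempty (or $k=1$, impossible since $\overline C$ connected with $|V_C|\ge 2$ forces $C$ disconnected). It remains to show $C_1$ and $C_2$ are complete graphs. Suppose $C_1$ is not complete; as a cograph, it then contains an induced $\overline{K_2}$, i.e. two nonadjacent vertices $x, y$. Pick any $z \in C_2$. If $n \ge 2$, a vertex $w$ of another coconnected component is adjacent to $x, y, z$, and $\{w; x, y, z\}$ induces $K_{1,3}$ since $x \not\sim y$, $x \not\sim z$, $y \not\sim z$, a contradiction. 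Hence $n = 1$ and $C = G = C_1 \sqcup C_2$ is disconnected, again a contradiction. Therefore $C_1$, and symmetrically $C_2$, is complete, completing the induction.
\end{proof}
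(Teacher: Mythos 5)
Your final proof sketch is correct and follows essentially the same route as the paper: decompose $G$ into coconnected components, observe that a coconnected component on at least two vertices is a disconnected cograph, and produce an induced $K_{1,3}$ from three pairwise non-adjacent vertices inside one coconnected component together with a vertex of another coconnected component (whose existence comes from $G$ being connected and non-complete). You in fact go one step further than the paper's written proof by explicitly verifying, via the same trick applied to two non-adjacent vertices in one connected component plus a vertex of the other, that the two connected components of each coconnected component are complete; the paper's proof only states the bound of two on the number of components, leaving that last step implicit.
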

\begin{proof}
Let $ G $ be a connected claw-free graph. 
If $ G $ is complete, then the assertion holds since $ G $ is the join of some single-vertex graphs. 
Suppose that $ G $ is non-complete. 
The connectivity of $ G $ shows that $ G $ has at least two coconnected components. 
Assume that there is a coconnected component $ G_{1} $ such that it consists of at least three connected components. 
Take vertices $ a,b,c $ from distinct connected components of $ G_{1} $ and take a vertex $ d $ from a coconnected component distinct from $ G_{1} $. 
Then the subgraph of $ G $ induced by $ \{a,b,c,d\} $ is isomorphic to the claw graph, which is a contradiction. 
Therefore the number of connected components of every coconnected component of $ G $ is at most two. 
\end{proof}

Now we ready to prove Theorem \ref{main theorem 2}. 
\begin{proof}[Proof of Theorem \ref{main theorem 2}]
Let $ G $ be a claw-free graph. 
Without loss of generality we may assume that $ G $ is connected and non-complete. 
By Lemma \ref{coconnected components}, our graph $ G $ is one of the following form: 
\begin{align*}
&(G_{1} \sqcup G_{1}^{\prime}) + \dots + (G_{m} \sqcup G_{m}^{\prime}), \\
&(G_{1} \sqcup G_{1}^{\prime}) + \dots + (G_{m} \sqcup G_{m}^{\prime}) + G_{m+1}, 
\end{align*}
where $ G_{i},G_{i}^{\prime} $ are complete graphs on some vertices. 
In order to show that $ \overline{G} $ is $ K_{3} $-free, it suffices to show that any subgraph of $ G $ induced by three vertices $ \{a,b,c\} $ has at least one edge. 

If $ a $ belongs to $ G_{m+1} $, then $ a $ is adjacent to any other vertices. 
In particular, we obtain edges $ \{a,b\} $ and $ \{a,c\} $. 
Suppose that two of $ \{a,b,c\} $ belong to distinct coconnected components. 
Then there is an edge connecting these two vertices. 
Hence we may assume that $ a,b,c $ belong to $ G_{i} \sqcup G_{i}^{\prime} $ for some $ i $. 
In this case, at least two of $ \{a,b,c\} $ belong to the same component and hence we have an edge. 
\end{proof}

\bibliographystyle{amsalpha}
\bibliography{bibfile}

\end{document}